\newtheorem{theorem}{Theorem}[section]
\newtheorem{lemma}[theorem]{Lemma}
\newtheorem{corollary}[theorem]{Corollary}
\newtheorem{conjecture}[theorem]{Conjecture}
\newtheorem{question}[theorem]{Question}
\newtheorem{proposition}[theorem]{Proposition}
\theoremstyle{definition}
\newtheorem{definition}[theorem]{Definition}
\newtheorem{remark}[theorem]{Remark}
\newtheorem{example}[theorem]{Example}
\def\bal{\begin{array}{ll}}
\def\eal{\end{array}}
\def\scs{\scriptstyle}
\def\scscs{\scriptscriptstyle}
\numberwithin{equation}{section}
\title[A Plethysm formula]{A Plethysm formula on the characteristic map
of induced linear characters from $U_n(\mathbb F_q)$ to
$GL_n(\mathbb F_q)$}
\author[Z. Chen]
{Zhi Chen}
\address[Zhi Chen]
{Department of Mathematics and Statistics\\ York University\\
   To\-ron\-to, Ontario M3J 1P3\\ CANADA}
\email[Zhi Chen]{czhi@mathstat.yorku.ca}
\date{\today}
\begin{document}

\begin{abstract} This paper gives a plethysm formula on the
characteristic map of the induced linear characters from the
unipotent upper-triangular matrices $U_n(\mathbb F_q)$ to
$GL_n(\mathbb F_q)$, the general linear group over finite field
$\mathbb F_q$. The result turns out to be a multiple of a twisted
version of the Hall-Littlewood symmetric functions
$\tilde{P}_n(Y,q)$. A recurrence relation is also given which makes
it easy to carry out the computation.
\end{abstract}

\maketitle
\section{Introduction}
Let $\mathbb F_q$ be a fixed finite field and $GL_n(\mathbb F_q)$
the finite general linear group over $\mathbb F_q$. The
representation theory of $GL_n(\mathbb F_q)$ over $\mathbb{C}$ has
been thoroughly studied by J.A.Green~\cite{Green}. He also
constructed the characteristic map which builds a connection between
the character spaces of $GL_n(\mathbb F_q)$ for $n\geq 0$ and the
Cartesian product over infinitely indexed sets of rings of symmetric
functions. In character theory, the study of induced linear
characters from subgroups is very useful to understand the character
ring of the larger group. In this paper, we consider certain induced
linear characters from the group of unipotent upper-triangular
matrices $U_n(\mathbb F_q)$ to $GL_n(\mathbb F_q)$. The
representations of these induced linear characters are known as
Gelfand-Graev modules, which play an important role in the
representation theory of finite groups of Lie type~(\cite{Gelgra},
\cite{Stein}). The formula for the characteristic map of the induced
linear characters is given by Thiem~\cite{Nat}. We then apply
plethysms on the image of the characteristic map. There are two
advantages in doing so: to get a simpler formula and to express the
result as a multiple of a twisted version of the Hall-Littlewood
symmetric functions $\tilde{P}_n(y,q)$. We hope this method could
contribute to the study on the irreducible decomposition of the
induced characters from $U_n(\mathbb F_q)$ to $GL_n(\mathbb F_q)$.

In section $2$ we give some background knowledge on symmetric
functions and representation theory of $GL_n(\mathbb F_q)$ and
$U_n(\mathbb F_q)$. Since the character theory of $U_n(\mathbb F_q)$
is known as a wild problem, supercharacter theory is built up as an
approximation of the ordinary character theory. The linear
characters of $U_n(\mathbb F_q)$ that we are considering are part of
the category of supercharcters of $U_n(\mathbb F_q)$.  We introduce
further questions about the induction of all supercharacters in
Section $4$. In Section $3$ we give our main result about the
plethysm formula. A recurrence relation is obtained naturally so
that we can carry out the computation of plethysms on the
characteristic map of the induced linear characters more easily. We
also give a relation between the characteristic map of the induced
characters from $U_n(\mathbb F_q)$ to $GL_n(\mathbb F_q)$ and the
plethysms on those characteristics. This is depicted in the
following diagram \xyoption{all}
\begin{displaymath}
\xymatrix{
  \otimes_{\varphi\in\Theta} \Lambda_{\mathbb C}(Y^{\varphi}) \ar[d]_{\rho} \ar[r]^{T}
                & \otimes_{f\in\Phi} \Lambda_{\mathbb C}(X_f) \ar[d]^{\Pi |_{\Lambda_{\mathbb C}(X_{f=x-1})}}  \\
 \Lambda_{\mathbb C}(Y)           \ar[r]_{t\circ\omega}
                & \Lambda_{\mathbb C}(X_{x-1})}
\end{displaymath}
where the notation is explained in Theorem~\ref{thm:comm}. From the
above commutative diagram we show that our simplified plethysm
formula does not lose any information on the characteristic map of
the induced characters from $U_n(\mathbb F_q)$ to $GL_n(\mathbb
F_q)$.

\noindent{\textbf{Acknowledgement}}\\
I am grateful to my advisor, Professor Nantel Bergeron, for his
guidance and discussions on this research problem.

\section{Background}
\subsection{Symmetric functions}

The notation in this paper follows closely the book of Macdonald
\cite{Mac}.
\begin{definition}
A {\sl partition} $\lambda$ of $n\in  \textbf{N}$, is a sequence
$\lambda=(\lambda_1, \lambda_2, \ldots, \lambda_l)$ of positive
integers in weakly decreasing order:
$\lambda_1\geq\lambda_2\geq\cdots\geq\lambda_l$, such that
$\lambda_1+\lambda_2+\cdots+\lambda_l=n$. We denote this by
$\lambda\vdash n$. Here, each $\lambda_i\ (1\leq i\leq l)$ is called
a {\sl part} of $\lambda$. We say the {\sl length} of the partition
$\lambda$ is $l$, which is the number of parts of $\lambda$. We use
$|\lambda|$ to denote the sum of all parts
$\lambda_1+\lambda_2+\cdots+\lambda_l$, and we call $|\lambda|$ the
{\sl size} of the partition. Sometimes we also use the notation:
$$\lambda = (1^{m_1},2^{m_2},\ldots,n^{m_n},\ldots)$$
where each $m_i$ means there are $m_i$ parts in $\lambda$ equal to
$i$.
\end{definition}

Let $\Lambda_{\mathbb C}(Y)$ denote the ring of symmetric functions
with complex coefficients in the variables $Y=\{y_1,y_2,\ldots\}$.
We denote the complete symmetric functions, elementary symmetric
functions, monomial symmetric functions, power-sum symmetric
functions, and Schur symmetric functions by $h_{\lambda}(Y)$,
$e_{\lambda}(Y)$, $m_{\lambda}(Y)$, $p_{\lambda}(Y)$, and
$s_{\lambda}(Y)$ respectively.

The generating function for $h_n(Y)$ is
$$H(Y;t) = \sum_{n\geq 0} h_n(Y) t^n = \prod_{j\geq 1} (1-y_j t)^{-1}.$$

Let $X=\{x_1,x_2,\ldots\}$ be another set of finite or infinite
variables. We have the following identity:
\begin{equation}\label{eq:omg}
\Omega[XY]:=\prod_{i,j}(1-x_iy_j)^{-1} =
\sum_{\lambda}m_{\lambda}(X)h_{\lambda}(Y)
\end{equation}
summed over all partitions $\lambda$.

There is a scalar product defined on $\Lambda_{\mathbb C}(Y)$, which
makes $(m_{\lambda})$ and $(h_{\lambda})$ dual to each other:
$$\langle h_{\lambda}, m_{\mu} \rangle = \delta_{\lambda\mu}$$
for all partitions $\lambda, \mu$, where $\delta_{\lambda\mu}$ is
the Kronecker delta.

We use $P_{\lambda}(Y;t)$ to denote the Hall-Littlewood symmetric
functions as defined in \cite{Mac}. If we define
\begin{align*}
q_r &= q_r(Y;t) = (1-t)P_{(r)}(Y; t)\ \ \text{for}\ r\geq 1 ,\\
q_0 &= q_0(Y;t) = 1 ,
\end{align*}
then the generating function for $q_r(Y;t)$ is
\begin{equation}\label{eq:genqr}
Q(u) = \sum_{r\geq 0} q_r(Y; t) u^r = \prod_i\frac{1-y_i t u}{1-y_i
u}\, \lower 6pt\hbox{.}
\end{equation}
For each partition $\lambda$ let $n(\lambda) = \sum_{i\geq 1}
(i-1)\lambda_i $. Define
\begin{equation*}
\tilde{P}_{\lambda}(Y;q) = q^{-n(\lambda)} P_{\lambda}(Y;q^{-1})
\end{equation*}
and we call $\tilde{P}_{\lambda}(Y;q)$ the twisted Hall-Littlewood
symmetric functions.

From \cite{Mac} it is well known that the {\sl plethysm} can be
defined by
\begin{equation}\label{eq:defplethysm}
h_a[ p_b ] = h_a(y_1^b,y_2^b,\ldots),
\end{equation}
which is the coefficient of $t^{ab}$ in $\prod_{j\geq 1} (1-y_j^b
t^b)^{-1}.$

\subsection{ Representation theory of $GL_n(\mathbb F_q)$}
The representation theory of the finite general linear group
$G_n=GL_n(\mathbb F_q)$ over $\mathbb{C}$ can be found in
J.A.Green~\cite{Green}, Macdonald~\cite{Mac} and Thiem~\cite{Nat}.
Here we give a short description on the characteristic map
constructed by J.A.Green.

Let $\bar{\mathbb F}_q$ denote the algebraic closure of the finite
field $\mathbb F_q$. The multiplicative group of $\bar{\mathbb F}_q$
is denoted by $\bar{\mathbb F}_q^{\times}$. Let $\bar{\mathbb
F}_q^{\ast}=\{\phi: \bar{\mathbb F}_q^{\times} \rightarrow \mathbb
C^{\times}\}$ be the group of complex-valued multiplicative
characters of $\bar{\mathbb F}_q^{\times}$. The Frobenius
automorphism of $\bar{\mathbb F}_q$ over $\mathbb F_q$ is given by
$$F: x \rightarrow x^q, \text{where}\ x\in \bar{\mathbb F}_q.$$

We then define
$$\Phi=\{F\text{-orbits\ of}\ \bar{\mathbb F}_q^{\times}\}\ \text{and}\
\Theta=\{F\text{-orbits\ of}\ \bar{\mathbb F}_q^{\ast}\}.$$ Since
every $F\text{-orbits\ of}\ \bar{\mathbb F}_q^{\times}$ is in
one-to-one correspondence with irreducible polynomial $f$ over
$\mathbb F_q$, we can also use $f$ to denote each $F\text{-orbit}$
in $\Phi$. A partition-valued function $\bm{\mu}$ on $\Phi$ is a
function which maps each $f\in\Phi$ to a partition $\bm{\mu}(f)$.
The size of $\bm{\mu}$ is
$$\|\bm{\mu}\|=\sum_{f\in\Phi} d(f)|\bm{\mu}(f)|,$$
where $d(f)$ is equal to the degree of $f\in\Phi$.

Let $\mathbb P$ denote the set of all partitions and
\begin{equation*}
\mathcal{P}^{\Phi}=\bigcup_{n\geq 0}\mathcal{P}_n^{\Phi},\
\text{where}\ \mathcal{P}_n^{\Phi}=\{\bm{\mu}: \Phi \rightarrow
\mathbb{P}\,;\ \|\mbox{\boldmath{$\mu$}}\|=n \}.
\end{equation*}
We use $K^{\bm{\mu}}$ to denote the conjugacy classes in $G_n$
parameterized by $\bm{\mu}\in\mathcal{P}_n^{\Phi}$~\cite{Mac}. The
characteristic function of the conjugacy class $K^{\bm{\mu}}$ is
denoted by $\pi_{\bm{\mu}}$.

Similarly, for each partition-valued function
$\mbox{\boldmath{$\lambda$}}: \Theta \rightarrow \mathbb{P}$, the
size of $\bm{\lambda}$ is
$$\|\bm{\lambda}\|=\sum_{\varphi\in\Theta} d(\varphi)|\bm{\lambda}(\varphi)|,$$
where $d(\varphi)$ is equal to the number of elements in $\varphi$.
Let
\begin{equation*}
\mathcal{P}^{\Theta}=\bigcup_{n\geq 0}\mathcal{P}_n^{\Theta},\
\text{where}\ \mathcal{P}_n^{\Theta}=\{\bm{\lambda}: \Theta
\rightarrow \mathbb{P}\,;\ \|\bm{\lambda}\|=n \}.
\end{equation*}
We use $G_n^{\bm{\lambda}}$ to denote the irreducible $G_n$-modules
indexed by $\bm{\lambda}\in\mathcal{P}_n^{\Theta}$ \cite{Mac}. The
character of the irreducible $G_n$-modules $G_n^{\bm{\lambda}}$ is
denoted by $\chi^{\bm{\lambda}}$.

For every $f\in\Phi$, let $X_f:=\{X_{1,f}, X_{2,f},\ldots\}$ be a
set of infinitely many variables. Each $X_{i,f}$ has degree $d(f)$.

Let
\begin{equation*}
\tilde P_{\eta}(f) =  \tilde{P}_{\eta}(X_{f}; q^{d(f)}) =
q^{-d(f)n(\eta)} P_{\eta}(X_{f}; q^{-d(f)})
\end{equation*}
where $\tilde{P}_{\eta}(X_{f}; q^{d(f)})$ is the twisted
Hall-Littlewood symmetric function. Define
\begin{equation*}
\tilde P_{\bm{\mu}} = \prod_{f\in\Phi} \tilde P_{\bm{\mu}(f)}(f).
\end{equation*}

For every $\varphi\in\Theta$, let $Y^{\varphi}:=\{Y_{1}^{\varphi},
Y_{2}^{\varphi},\ldots\}$ be a set of infinitely many variables.
Each $Y_{i}^{\varphi}$ has degree $d(\varphi)$. Define
\begin{equation*}
S_{\bm{\lambda}} =
\prod_{\varphi\in\Theta}s_{\bm{\lambda}(\varphi)}(Y^{\varphi}),
\end{equation*}
where $s_{\bm{\lambda}(\varphi)}(Y^{\varphi})$ is the Schur
symmetric function.

Let
\begin{equation*}
\Lambda_{\mathbb C} = \otimes_{f\in\Phi} \Lambda_{\mathbb C}(X_f)=
\otimes_{\varphi\in\Theta} \Lambda_{\mathbb C}(Y^{\varphi})
\end{equation*}
where $\Lambda_{\mathbb C}(X_f)$ is the ring of symmetric functions
in $X_{f}$, and $\Lambda_{\mathbb C}(Y^{\varphi})$ is the ring of
symmetric functions in $Y^{\varphi}$. As a graded ring, we have
\begin{align*}
\Lambda_{\mathbb C} &={\mathbb C}\text{-span}\{\tilde
P_{\bm{\mu}} | \bm{\mu}\in\mathcal{P}^{\Phi} \} \\
&={\mathbb C}\text{-span}\{S_{\bm{\lambda}} |
\mathbf{\bm{\lambda}}\in\mathcal{P}^{\Theta} \}
\end{align*}

The transformation between the symmetric functions in the variables
$\{X_{f}: f\in \Phi\}$ and those in the variables $\{Y^{\varphi}:
\varphi\in\Theta\}$ is given by the following identity:
\begin{equation}\label{eq:transform}
p_k(Y^{\varphi})=(-1)^{n-1}\sum_{x\in M_{n}}\xi
(x)p_{n/d(f_x)}(X_{f_x})\,,
\end{equation}
where $\xi\in\varphi$, $x\in f_x$ and $n= k\cdot d(\varphi)$. Here
$p_k(Y^{\varphi})$ and $p_{n/d(f_x)}(X_{f_x})$ are power-sum
symmetric functions.

From \cite{Mac} we know that the conjugacy classes $K^{\bm{\mu}}$ of
$G_n$ are parameterized by $\bm{\mu}\in\mathcal{P}_n^{\Phi}$, and
the irreducible characters $\chi^{\bm{\lambda}}$ of $G_n$ are
indexed by $\bm{\lambda}\in\mathcal{P}_n^{\Theta}$. The following
theorem gives the characteristic map of $G_n$.

\begin{theorem}{\rm(Green~\cite{Nat}, Macdonald~\cite{Mac},
Zelevinski~\cite{Zele})}
Let $A_n$ denote the space of
complex-valued class functions on $G_n$ and $A = \oplus_{n\geq 0}
A_n$. The linear map
\begin{align*}
ch : A &\longrightarrow \Lambda_{\mathbb C}\\
    \chi^{\bm{\lambda}} &\mapsto S_{\bm{\lambda}}, \ \ \ \text{for}\ \bm{\lambda} \in
\mathcal{P}^{\Theta},\\
\pi_{\bm{\mu}} &\mapsto \tilde{P}_{\bm{\mu}}, \ \ \ \text{for}\
\bm{\mu}\in \mathcal{P}^{\Phi},
\end{align*}
is a Hopf algebra isomorphism.
\end{theorem}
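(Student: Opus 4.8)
The plan is to follow the Hopf-algebraic route of Zelevinsky, supplemented by Green's character computation for the half of the statement concerning conjugacy-class functions. First I would equip $A=\bigoplus_{n\ge 0}A_n$ with the structure of a graded connected Hopf algebra over $\mathbb C$: the product $A_m\otimes A_n\to A_{m+n}$ is Harish--Chandra (parabolic) induction, sending a pair of class functions on the Levi subgroup $G_m\times G_n$ of a maximal parabolic $P\le G_{m+n}$ to the induced class function on $G_{m+n}$, and the coproduct $A_n\to\bigoplus_m A_m\otimes A_{n-m}$ is the corresponding parabolic restriction. The bialgebra axiom (compatibility of product and coproduct) follows from Mackey's intertwining-number theorem applied to the double cosets $P\backslash G/Q$, and connectedness ($A_0=\mathbb C$) supplies the antipode automatically. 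With respect to the standard inner product on class functions, induction and restriction are mutually adjoint, and in the basis of irreducible characters all structure constants are nonnegative, so $A$ is a positive self-adjoint Hopf algebra (PSH algebra) in Zelevinsky's sense.

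Next I would analyse the target $\Lambda_{\mathbb C}=\bigotimes_{\varphi\in\Theta}\Lambda_{\mathbb C}(Y^\varphi)$, where each tensor factor carries the usual bialgebra structure on symmetric functions (coproduct dual to multiplication for the Hall inner product, for which the $s_{\bm\lambda(\varphi)}(Y^\varphi)$ are orthonormal). This is again a PSH algebra, whose irreducible primitive elements are indexed by $\Theta$, with the factor corresponding to $\varphi$ contributing a copy of the ring of symmetric functions graded so that the generators sit in degree $d(\varphi)$. By Zelevinsky's structure theorem a PSH algebra is determined up to isomorphism by its set of irreducible primitives together with their self-dual symmetric-function factors, so it suffices to check that the cuspidal irreducible characters of the groups $G_d$ (which are exactly the primitive elements of $A$) are parametrized, with the correct degree, by the $F$-orbits of size $d$ in $\bar{\mathbb F}_q^{\,\ast}$, i.e. by $\Theta$. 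Granting this standard fact about cuspidal representations of $GL_d(\mathbb F_q)$, one obtains a Hopf algebra isomorphism $ch:A\to\Lambda_{\mathbb C}$ with $ch(\chi^{\bm\lambda})=S_{\bm\lambda}$.

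It then remains to identify the image of the other distinguished basis, namely $ch(\pi_{\bm\mu})=\tilde P_{\bm\mu}$. Since both the $\pi_{\bm\mu}$ and the $\tilde P_{\bm\mu}$ factor over the index set $\Phi$, and the transform $(\ref{eq:transform})$ between the $X_f$- and $Y^\varphi$-power sums is precisely Green's relation between Deligne--Lusztig functions and conjugacy-class characteristic functions, the claim reduces to a statement inside a single copy $\Lambda_{\mathbb C}(X_f)$ with $d=d(f)$: the matrix expressing the characteristic function of a class of type $\eta$ in $GL_d(\mathbb F_q)$ in terms of irreducible characters must coincide with the transition matrix from $\tilde P_\eta(X_f)=q^{-dn(\eta)}P_\eta(X_f;q^{-d})$ to Schur functions, i.e. with the Green/Kostka--Foulkes matrix specialized at $q^{-d}$. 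This is the content of Green's original character-table computation, which I would either cite from \cite{Nat}, \cite{Mac}, \cite{Zele} or reprove by the standard generating-function manipulation built on $(\ref{eq:omg})$ and $(\ref{eq:genqr})$ (equivalently, the statement that the Hall algebra of a single $\mathbb F_{q^d}$-point is $\Lambda$ with its Hall--Littlewood basis at parameter $q^{-d}$).

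The main obstacle is exactly this last identification --- matching the character values of $GL_n(\mathbb F_q)$ with Hall--Littlewood data and pinning down the cuspidal primitives. Everything else (the Hopf structure, adjointness via Frobenius reciprocity, the bialgebra axiom via Mackey, connectedness hence existence of the antipode, and the passage from a basis-preserving linear bijection to an isomorphism of Hopf algebras) is formal bookkeeping once Mackey's theorem is available. Accordingly I would organise the write-up so that Green's theorem and the cuspidal parametrization enter as quoted external inputs, with the body of the argument being the PSH-algebra machinery that glues them together and simultaneously verifies the two compatibility assertions $ch(\chi^{\bm\lambda})=S_{\bm\lambda}$ and $ch(\pi_{\bm\mu})=\tilde P_{\bm\mu}$.
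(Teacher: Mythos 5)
The paper does not prove this theorem at all: it is quoted as a background result with attributions to Green, Macdonald, and Zelevinsky, so there is no in-paper argument to compare yours against. Your proposal is a faithful reconstruction of the route taken in the cited sources, specifically Zelevinsky's positive self-adjoint Hopf algebra formalism (product and coproduct via parabolic induction and restriction, the bialgebra axiom via Mackey theory, the structure theorem reducing everything to the parametrization of cuspidals of $G_d$ by $F$-orbits of size $d$ in $\bar{\mathbb F}_q^{\ast}$) combined with Green's character computation to identify $ch(\pi_{\bm{\mu}})$ with $\tilde P_{\bm{\mu}}$. As an outline this is sound, and you correctly isolate the two genuinely hard external inputs. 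One refinement worth recording if you were to write this out in full: Zelevinsky's structure theorem determines the isomorphism of each cuspidal factor with $\Lambda$ only up to the involution $\omega$ (swapping the roles of $h_n$ and $e_n$, equivalently of trivial-type and Steinberg-type constituents), so to get precisely $ch(\chi^{\bm{\lambda}})=S_{\bm{\lambda}}$ with the stated indexing you must fix this normalization, e.g.\ by declaring where the unipotent characters attached to one-row versus one-column partitions are sent; similarly the identification $ch(\pi_{\bm{\mu}})=\tilde P_{\bm{\mu}}$ is sensitive to the normalization of the class indicator functions and of the twist $q^{-d(f)n(\eta)}$, which is exactly where Green's polynomials enter and cannot be waved away as bookkeeping.
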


\subsection{ Supercharacter theory}
Let $U_n$ be the group of unipotent upper-triangular matrices with
entries in the finite field $\mathbb F_q$ and ones on the diagonal.
This group is the subgroup of the finite general linear group $G_n$.
Although the character theory on $U_n$ is a wild problem, another
slightly coarse version called superclass and supercharacter theory
(Andr\'e~\cite{An95}, Yan~\cite{Yan}) makes it easier to study and
compute. Superclasses are certain unions of conjugacy classes and
supercharacters are sums of irreducible characters. They are
compatible in the sense that supercharacters are constant on
superclasses. The supercharacter theory has a rich combinatorial
structure (ref.~\cite{Nat2}) and connects to some other algebra
structures as well (ref.~\cite{MulAu}).

The superclasses of $U_n$ can be indexed by the $\mathbb
F_q^{\times}$-labeled set partitions, and a supercharacter becomes
an irreducible character if the corresponding indexed $\mathbb
F_q^{\times}$-labeled set partition has no crossing arcs. For the
strict definitions and more details on supercharacters please see
~\cite{Nat2} or ~\cite{MulAu}.

In this paper we consider the linear supercharacters of $U_n$
indexed by
\begin{align*}
\begin{tikzpicture}
\filldraw [black] (0,0) circle (1.5pt) (.5,0) circle (1.5pt) (1,0)
circle (1.5pt) (1.7,0) circle (0.5pt) (2,0) circle (0.5pt) (2.3,0)
circle (0.5pt) (3,0) circle (1.5pt); \node at (0, -.2) {$\scscs 1$};
\node at (0.25, .35) {$\scs q_1$};\node at (0.5, -.2) {$\scscs 2$};
\node at (0.75, .35) {$\scs q_2$}; \node at (1, -.2) {$\scscs 3$};
\node at (1.25, .35) {$\scs q_3$}; \node at (3, -.2) {$\scscs n$};
\node at (2.75, .35) {$\scs q_{n-1}$}; \draw (0,0) .. controls
(.25,.25) .. (.5,0); \draw (.5,0) .. controls (.75,.25) .. (1,0);
\draw (1,0) .. controls (1.25,.25) .. (1.5,0); \draw (2.5,0) ..
controls (2.75,.25) .. (3,0);
\end{tikzpicture}
\end{align*}
where $q_1,\ldots,q_{n-1}\in\mathbb F_q^{\times}$
(Thiem~\cite{Nat2}). Let $\chi^{(n)}_{(q_1,\ldots,q_{n-1})}$ denote
the above character. We induce $\chi^{(n)}_{(q_1,\ldots,q_{n-1})}$
from $U_n$ to $G_n$ by the formula
\begin{equation}\label{eq:inform}
\chi^{(n)}_{(q_1,\ldots,q_{n-1})}\uparrow_{U_n}^{G_n}(g) =
\frac{1}{|U_n|}\sum_{h\in G_n}
\bar{\chi}^{(n)}_{(q_1,\ldots,q_{n-1})} (h g h^{-1}),
\end{equation}
where $\bar{\chi}(s)=\chi(s)$ if $s\in U_n$ and $\bar{\chi}(s)=0$ if
$s\not\in U_n$.

The induced character
$\chi^{(n)}_{(q_1,\ldots,q_{n-1})}\uparrow_{U_n}^{G_n}$ is a
character of $G_n$, which is known as the character of Gelfand-Greav
module. Apply plethysms on the characteristic map of
$\chi^{(n)}_{(q_1,\ldots,q_{n-1})}\uparrow_{U_n}^{G_n}$ and we get a
multiple of the twisted Hall-Littlewood symmetric function
$\tilde{P}_n$. A formula on this result together with a recurrence
relation is given in Section~\ref{sec:Plethysm}.

\section{Plethysm Formula for the induced character}\label{sec:Plethysm}

We start from the formula of the characteristic map of
$\chi^{(n)}_{(q_1,\ldots,q_{n-1})}\uparrow_{U_n}^{G_n}$, which is
given by Thiem~\cite{Nat}.

\begin{theorem} {\rm (Thiem~\cite{Nat})}
\begin{equation}\label{eq:chamap}
ch(\chi^{(n)}_{(q_1,\ldots,q_{n-1})}\uparrow_{U_n}^{G_n}) =
\sum_{\bm{\lambda} \in \mathcal{P}_n^{\Theta}\atop
ht(\bm{\lambda})=1} S_{\bm{\lambda}},
\end{equation}
where $ht(\bm{\lambda})=max\{l(\bm{\lambda} (\varphi)) | \varphi \in
\Theta\}$.
\end{theorem}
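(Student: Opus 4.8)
The plan is to transport the identity \eqref{eq:chamap} across the characteristic map, reduce it to a multiplicity computation on $U_n$, and then recognise the answer as counting the ``regular'' irreducible characters of $G_n$. Since the characteristic map of the theorem above (Green, Macdonald, Zelevinsky) is an isometry carrying the orthonormal basis $\{\chi^{\bm{\lambda}}\}$ to the orthonormal basis $\{S_{\bm{\lambda}}\}$, so that $\langle ch(f),S_{\bm{\lambda}}\rangle=\langle f,\chi^{\bm{\lambda}}\rangle_{G_n}$ for every class function $f$, Frobenius reciprocity turns \eqref{eq:chamap} into the assertion that for every $\bm{\lambda}\in\mathcal{P}_n^{\Theta}$,
\begin{equation*}
\bigl\langle\,\chi^{(n)}_{(q_1,\ldots,q_{n-1})},\ \chi^{\bm{\lambda}}\big|_{U_n}\,\bigr\rangle_{U_n}=
\begin{cases}
1, & ht(\bm{\lambda})=1,\\
0, & \text{otherwise.}
\end{cases}
\end{equation*}
Now the linear characters $\chi^{(n)}_{(q_1,\ldots,q_{n-1})}$ with all $q_i\in\mathbb F_q^{\times}$ form a single orbit under conjugation by the diagonal torus of $G_n$, so the multiplicity above is independent of the chosen tuple; writing $\psi$ for this nondegenerate linear character, $\chi^{(n)}_{(q_1,\ldots,q_{n-1})}\uparrow_{U_n}^{G_n}$ is, up to isomorphism, the Gelfand--Graev character $\Gamma_n$ of $G_n$, and the left-hand side is the multiplicity of $\chi^{\bm{\lambda}}$ in $\Gamma_n$.

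Two things then remain. The first is that this multiplicity is always $0$ or $1$ --- the classical multiplicity-freeness of the Gelfand--Graev representation --- which I would obtain from Gelfand's trick: a suitable anti-automorphism of $G_n$ stabilises $U_n$ and fixes $\psi$ up to conjugacy, forcing the Hecke algebra $e_\psi\mathbb C G_n e_\psi\cong\mathrm{End}_{G_n}(\Gamma_n)$ to be commutative. The second, which is the real content, is to identify \emph{which} $\bm{\lambda}$ give multiplicity $1$. For this I would use that $\Gamma_n$ is supported on the unipotent classes of $G_n$ (if $h g h^{-1}\in U_n$ then $g$ is unipotent), so that, with $u_\mu$ a unipotent element of Jordan type $\mu$, the characteristic map theorem gives
\begin{equation*}
ch(\Gamma_n)=\sum_{\mu\vdash n}\Gamma_n(u_\mu)\,\tilde P_\mu(X_{x-1};q)\ \in\ \Lambda_{\mathbb C}(X_{x-1}),
\end{equation*}
whereas, since $s_{(m)}=h_m$,
\begin{equation*}
\sum_{\bm{\lambda}\in\mathcal{P}_n^{\Theta},\ ht(\bm{\lambda})=1}S_{\bm{\lambda}}
=\sum_{(m_\varphi)_{\varphi}:\ \sum_\varphi d(\varphi)m_\varphi=n}\ \prod_{\varphi\in\Theta}h_{m_\varphi}(Y^\varphi).
\end{equation*}
Thus the theorem becomes the symmetric-function identity equating these two expressions; summed over all $n$ the right-hand side is simply $\prod_{\varphi\in\Theta}\prod_i(1-Y_i^\varphi)^{-1}$, and I would match it to the left-hand side by evaluating the Kloosterman-type character sums $\Gamma_n(u_\mu)=\bigl(|C_{G_n}(u_\mu)|/|U_n|\bigr)\sum_{v\in U_n,\ v\sim u_\mu}\psi(v)$, which depend only on $\mu$, and feeding the outcome through the transition formula \eqref{eq:transform} relating the $X_f$- and $Y^\varphi$-variables.

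An alternative for this last step, bypassing the character sums, is to invoke Deligne--Lusztig theory: $\Gamma_n$ is the (multiplicity-free) sum over semisimple classes $[s]$ in the dual group of the unique regular character in the Lusztig series $\mathcal E(G_n,[s])$, and for $GL_n$ the regular character of a series is precisely the one whose Green parameter assigns a single row to each $\varphi$. The main obstacle is exactly this identification of the constituents: multiplicity-freeness is formal, but deciding the $0/1$ pattern needs genuine input --- the unipotent character sums together with the transition formula, or the regular-character description. A subsidiary point that needs care is the normalisation of the characteristic map, namely the twist built into $\tilde P$ rather than $P$, equivalently an $\omega$-twist on each $\Lambda_{\mathbb C}(Y^\varphi)$: it is this choice that makes ``generic'' correspond to single rows, i.e.\ to $ht(\bm{\lambda})=1$, rather than to single columns.
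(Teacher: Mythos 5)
The paper does not prove this statement: it is imported verbatim from Thiem's thesis \cite{Nat} and used as the starting point of Section~\ref{sec:Plethysm}, so there is no internal proof to compare yours against. Judged on its own terms, your outline is the classical route to the decomposition of the Gelfand--Graev character of $GL_n(\mathbb F_q)$, and its skeleton is sound: the reduction by Frobenius reciprocity is legitimate (though note the paper's statement of Green's theorem only records that $ch$ is a Hopf algebra isomorphism, so the isometry $\langle ch(f),S_{\bm{\lambda}}\rangle=\langle f,\chi^{\bm{\lambda}}\rangle$ is an additional standard fact you are importing); the diagonal torus does act transitively on the nondegenerate linear characters of $U_n$, so the induced character is independent of $(q_1,\ldots,q_{n-1})$; and Gelfand's trick does give multiplicity-freeness. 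As a sanity check in your favour, the number of $\bm{\lambda}$ with $ht(\bm{\lambda})=1$ is the coefficient of $t^n$ in $\prod_{i\geq1}(1-t^i)^{-l_q(i)}=(1-t)/(1-qt)$, namely $q^n-q^{n-1}$, which is the number of semisimple classes of $GL_n(\mathbb F_q)$, as it must be.

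The genuine gap is that the decisive step --- deciding \emph{which} $\bm{\lambda}$ occur --- is announced but not executed, and you say so yourself. The character-sum route requires actually evaluating $\Gamma_n(u_\mu)$ on each unipotent class and pushing the result through the transition formula \eqref{eq:transform} to extract the coefficient of $S_{\bm{\lambda}}$; neither computation appears, and this is where all the work lives. The Deligne--Lusztig alternative does not close the gap either: the assertion that ``the regular character of a Lusztig series is precisely the one whose Green parameter assigns a single row to each $\varphi$'' is, up to bookkeeping, the statement being proved, so invoking it is circular in this context. Finally, the normalisation issue you flag at the end is real and left unresolved: with the transposed convention the identical argument outputs the set of $\bm{\lambda}$ whose values are single columns rather than single rows, so you need a concrete anchor --- for instance, verifying that the trivial character of $G_n$ has multiplicity $0$ (immediate, since $\psi$ is nontrivial) and that the Steinberg character has multiplicity $1$, and checking which of these corresponds to $ht=1$ under the paper's parametrisation --- before the $0/1$ pattern can be asserted in the stated form.
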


Notice that $ht(\bm{\lambda})=1$ implies for every
$\varphi\in\Theta$ we have $l(\bm{\lambda} (\varphi))\leq 1$, which
means $\bm{\lambda} (\varphi)$ contains at most one part. From the
definition of $S_{\bm{\lambda}}$, we can write (\ref{eq:chamap}) as
\begin{align}\label{eq:char}
ch(\chi^{(n)}_{(q_1,\ldots,q_{n-1})}\uparrow_{U_n}^{G_n}) & =
\sum_{\bm{\lambda} \in \mathcal{P}_n^{\Theta}\atop
ht(\bm{\lambda})=1} \prod_{\varphi\in\Theta}
s_{\bm{\lambda}^{(\varphi)}}(Y^{\varphi})\nonumber\\
& =\sum_{a_1b_1+\cdots+a_kb_k=n\atop
\bm{\lambda}(\Theta)=\{a_1,\ldots,a_k\}\in
\mathcal{P}_n^{\Theta}}\sum_{deg(\varphi_i)=b_i\atop
\varphi_1,\ldots,\varphi_k \text{ distinct}}
h_{a_1}(Y^{\varphi_1})h_{a_2}(Y^{\varphi_2})\cdots
h_{a_k}(Y^{\varphi_k}) ,
\end{align}
where $Y^{\varphi_1}, Y^{\varphi_2},\ldots, Y^{\varphi_k}$ are
different sets of variables. For $i$ from $1$ to $k$, each variable
in the set $Y^{\varphi_i}=\{Y^{\varphi_i}_1,Y^{\varphi_i}_2,\ldots
\}$ has degree $b_i$.

We give an example to better understand formula~(\ref{eq:chamap})
and (\ref{eq:char}).
\begin{example}
For $n=3$, we have
\begin{align*}
ch(\chi^{(3)}_{(q_1,q_{2})}\uparrow_{U_3}^{G_3}) &=
\sum_{\varphi_1,\varphi_2,\varphi_3 \text{distinct}\atop
deg(\varphi_i)=1}
h_1(Y^{\varphi_1})h_1(Y^{\varphi_2})h_1(Y^{\varphi_3}) \\
& + \sum_{\psi_1,\psi_2 \text{distinct}\atop deg(\psi_i)=1}
h_2(Y^{\psi_1})h_1(Y^{\psi_2}) + \sum_{deg(\bar{\varphi}_1)=2\atop
deg(\bar{\varphi}_2)=1} h_1(Y^{\bar{\varphi}_1})h_1(Y^{\bar{\varphi}_2})\\
& + \sum_{deg(\varphi)=1} h_3(Y^{\varphi}) + \sum_{deg(\psi)=3}
h_1(Y^{\psi}).
\end{align*}
\end{example}
From the above example we see that the expansion on the right-hand
side of~(\ref{eq:char}) becomes more complicated as $n$ increases.
This inspires us to use plethysm to simplify the computation.

For each term in equation~(\ref{eq:char}), we have a {\sl two-rowed}
array $\left(\begin{array}{cccc}
b_1 & b_2 & \cdots & b_k \\
a_1 & a_2 & \cdots & a_k
\end{array}\right)$ where $b_i=d(\varphi_i)$ and it satisfies the condition $\sum_{i=1}^{k} a_i b_i =
n$. We arrange the pairs $(b_i,a_i)$ such that:

(1) $b_1\leq b_2\leq\ldots\leq b_k$,

(2) $a_j\leq a_{j+1}$ if $b_j=b_{j+1}$ for $1\leq j<k$.

Once the array is sorted, we can denote it as follows:
\begin{equation*}
\left(\begin{array}{ccccccccccccc}
   &  & 1^{m_1} &  &  &  & 2^{m_2} &  & \cdots  &  &  & n^{m_n} &  \\
  1^{m_{1,1}} & 2^{m_{1,2}} & \cdots & n^{m_{1,n}} & 1^{m_{2,1}} & 2^{m_{2,2}} & \cdots & n^{m_{2,n}} & \cdots & 1^{m_{n,1}} & 2^{m_{n,2}} & \cdots & n^{m_{n,n}}
\end{array}\right)
\end{equation*}
where $\sum_{i,j=1}^{n} m_{i,j}\times j\times i=n$ and
$m_{i,1}+m_{i,2}+\ldots+m_{i,n}=m_i$ for $1\leq i \leq n$. Each
$m_i$ counts the number of different sets of variables appearing in
the term with the same degree $i$. Each $m_{i,j}$ counts the number
of complete symmetric functions $h_{j}$ in variables with degree
$i$.

For a given $i$, let $l_q(i)$ denote the number of all different
sets of variables with the same degree $i$. We know that $l_q(i)$ is
equal to the number of irreducible polynomials $f$ over finite field
$\mathbb F_q$ with degree $i$ and satisfying $f(0)\neq 0$. The
number of irreducible polynomials of degree $i$ over $\mathbb F_q$
is given by the fomula
$$L_q(i)=\frac{1}{i}\sum_{d|i}\mu(d)q^{\frac{i}{d}},$$
where $\mu$ is the M\"{o}bius function. Then we have
\begin{equation*}
l_q(i)=\left\{
  \begin{array}{ll}
    L_q(1)-1, & \text{for}\ i=1; \\
    L_q(i), & \text{for}\ i\geq 2.
  \end{array}
\right.
\end{equation*}
Thus for a given $i$ and a list of numbers $(m_{i,1}, m_{i,2},
\ldots, m_{i,n})$ where $m_{i,1}+m_{i,2}+\ldots + m_{i,n}=m_i$, the
number of products in the form
\begin{align}\label{eq:prod}
& h_{1}(Y^{\varphi_{i,1}})h_{1}(Y^{\varphi_{i,2}})\cdots
h_{1}(Y^{\varphi_{i,m_{i,1}}}) \nonumber\\
& \times
h_{2}(Y^{\varphi_{i,m_{i,1}+1}})h_{2}(Y^{\varphi_{i,m_{i,1}+2}})\cdots
h_{2}(Y^{\varphi_{i,m_{i,1}+m_{i,2}}})\nonumber\\
& \times \cdots \nonumber\\
& \times h_{n}(Y^{\varphi_{i,m_{i,1}+\cdots +
m_{i,n-1}+1}})h_{n}(Y^{\varphi_{i,m_{i,1}+\cdots +
m_{i,n-1}+2}})\cdots h_{n}(Y^{\varphi_{i,m_{i}}})
\end{align}
is equal to
$$\frac{l_q(i)(\l_q(i)-1)\cdots(l_q(i)-m_{i}+1)}{m_{i,1}!m_{i,2}!\cdots
m_{i,n}!} , $$ where $Y^{\varphi_{i,1}}, Y^{\varphi_{i,2}}, \ldots,
Y^{\varphi_{i,m_{i}}}$ are $m_i$ different sets of variables with
the same degree $i$. Notice that when $n$ increases, we get more
terms on the right-hand side of equation~(\ref{eq:char}).

In order to simplify the computation, we apply plethysms
on~(\ref{eq:char}) which means replacing each set of variables
$Y^{\varphi_i}$ by $\{y^{b_i}_1, y^{b_i}_2, \ldots\}$. In doing so
we don't differentiate the sets of variables. It seems that we lose
information by applying the plethysms on the characteristic map.
However this is not the case as we see later on in
Theorem~\ref{thm:comm} and Corollary~\ref{col:equi}.

\begin{definition}
Define the {\sl plethysm map} $\rho: {\mathbb
C}\text{-span}\{S_{\bm{\lambda}} |
\mathbf{\bm{\lambda}}\in\mathcal{P}^{\Theta} \} \rightarrow
\Lambda_{\mathbb C}(Y) $ as follows:
$$\rho(h_{a}(Y^{\varphi})) = h_{a}[p_{b}(Y)], \ \forall \varphi\in\Theta, b=deg(\varphi).$$
\end{definition}
Since $ch(\chi^{(n)}_{(q_1,\ldots,q_{n-1})}\uparrow_{U_n}^{G_n})$ is
independent from $q_1,\ldots,q_{n-1}$, for $n\geq 1$ we simply
denote
$\rho(ch(\chi^{(n)}_{(q_1,\ldots,q_{n-1})}\uparrow_{U_n}^{G_n}))$ by
$\rho_n$ and set $\rho_0=1$. We also use $\rho_{(m_{i,1},\ldots,
m_{i,n})}$ to denote the results of taking plethysms on the sum of
all different products in the form of (\ref{eq:prod}) for the same
index list $(m_{i,1},\ldots, m_{i,n})$, i.e.
$$\rho_{(m_{i,1},\ldots, m_{i,n})}:=\frac{l_q(i)(\l_q(i)-1)\cdots(l_q(i)-m_{i}+1)}{m_{i,1}!m_{i,2}!\cdots
m_{i,n}!}(h_{1}[p_{i}])^{m_{i,1}}\cdots (h_{n}[p_{i}])^{m_{i,n}}.$$
Taking plethysms on both sides of (\ref{eq:char}) we get
\begin{equation}\label{eq:plechar}
\rho_n = \sum_{\sum_{i,j=1}^{n} m_{i,j}\times j\times i=n}
\rho_{(m_{1,1},\ldots, m_{1,n})}\cdots \rho_{(m_{n,1},\ldots,
m_{n,n})} .
\end{equation}

The following theorem falls naturally.
\begin{theorem}\label{thm:lm1}
Let $CH(t)$ denote the generating function for $\rho_n$ as follows:
$$CH(t) = 1+ \rho_1 t+\rho_2 t^2+\cdots = \sum_{s\geq 0} \rho_n t^n . $$
Then we have
$$CH(t) = \prod_{i\geq 1}\left(\prod_{j\geq 1}(1-y_j^{i}t^{i})^{-1}\right)^{lq(i)} = \prod_{i\geq 1}\prod_{j\geq 1}(1-y_j^{i}t^{i})^{-lq(i)}.$$
\end{theorem}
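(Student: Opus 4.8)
The plan is to start from the recurrence (\ref{eq:plechar}) and pass to generating functions, using that its right-hand side already factors over the degree index $i$. Since $\sum_{i,j\geq 1} m_{i,j}\,i\,j=n$ on each summand, the monomial $t^n$ splits as $\prod_{i\geq 1}\prod_{j\geq 1} t^{\,i j\, m_{i,j}}$. Substituting the explicit form of $\rho_{(m_{i,1},\ldots,m_{i,n})}$ into (\ref{eq:plechar}), multiplying by $t^n$ and summing over $n$ — a legitimate manipulation in the ring of formal power series, since for each fixed $n$ only finitely many index tuples contribute and only finitely many $i$ are involved — one obtains
$$
CH(t)=\prod_{i\geq 1}\ \sum_{m_{i,1},m_{i,2},\ldots\geq 0}\ \frac{l_q(i)\bigl(l_q(i)-1\bigr)\cdots\bigl(l_q(i)-m_i+1\bigr)}{m_{i,1}!\,m_{i,2}!\cdots}\ \prod_{j\geq 1}\bigl(h_j[p_i]\,t^{ij}\bigr)^{m_{i,j}},
$$
with $m_i:=\sum_{j\geq 1} m_{i,j}$.

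Next I would recognize the inner sum. Put $x_j:=h_j[p_i]\,t^{ij}$ and $N:=l_q(i)$. The number $l_q(i)$ is a genuine nonnegative integer (it counts the irreducible polynomials $f$ over $\mathbb F_q$ of degree $i$ with $f(0)\neq 0$), so $\frac{N(N-1)\cdots(N-m_i+1)}{m_{i,1}!m_{i,2}!\cdots}=\frac{N!}{(N-m_i)!\,m_{i,1}!\,m_{i,2}!\cdots}$ is exactly the multinomial coefficient attached to the monomial $x_1^{m_{i,1}}x_2^{m_{i,2}}\cdots$ in the expansion of $(1+x_1+x_2+\cdots)^{N}$. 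Hence the inner sum equals $\bigl(\sum_{j\geq 0} h_j[p_i]\,t^{ij}\bigr)^{l_q(i)}$ with the convention $h_0=1$.

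Finally, by the definition (\ref{eq:defplethysm}) of plethysm one has $h_j[p_i]=h_j(y_1^i,y_2^i,\ldots)$, and specializing the generating function $\sum_{j\geq 0} h_j(Z)\,u^j=\prod_k(1-z_k u)^{-1}$ at $z_k=y_k^i$ and $u=t^i$ gives $\sum_{j\geq 0} h_j[p_i]\,t^{ij}=\prod_{k\geq 1}(1-y_k^i t^i)^{-1}$. Substituting back yields
$$
CH(t)=\prod_{i\geq 1}\Bigl(\prod_{j\geq 1}(1-y_j^i t^i)^{-1}\Bigr)^{l_q(i)}=\prod_{i\geq 1}\prod_{j\geq 1}(1-y_j^i t^i)^{-l_q(i)},
$$
as claimed. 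The only delicate step is the middle one: seeing that the combinatorial prefactor in $\rho_{(m_{i,1},\ldots,m_{i,n})}$ is precisely a multinomial coefficient in $(1+\sum_j x_j)^{l_q(i)}$, which is a polynomial identity exactly because $l_q(i)$ is a nonnegative integer; the remainder is bookkeeping of the exponents of $t$ and the standard generating function for complete homogeneous symmetric functions. As a sanity check one may compare the coefficients of $t^1,t^2,t^3$ against the expansion coming from (\ref{eq:char}).
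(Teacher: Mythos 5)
Your proposal is correct and follows essentially the same route as the paper's own proof: both rest on identifying the prefactor $\frac{l_q(i)(l_q(i)-1)\cdots(l_q(i)-m_i+1)}{m_{i,1}!\cdots m_{i,n}!}$ as the multinomial coefficient in the expansion of $\bigl(\sum_{j\geq 0}h_j[p_i]t^{ij}\bigr)^{l_q(i)}$, combined with the generating function $\sum_{a\geq 0}h_a[p_i]t^{ai}=\prod_{j\geq 1}(1-y_j^it^i)^{-1}$ and equation~(\ref{eq:plechar}). The only difference is direction of travel (you assemble the sum into the product, the paper expands the product to recover the sum), which is immaterial.
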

\begin{proof}
Since for every $i\geq 1$,
\begin{align*}
\prod_{j\geq 1}(1-y_j^{i}t^{i})^{-1} & = \sum_{a\geq 0} h_a(y_1^i,
y_2^i, \ldots)t^{a\cdot i}\\
& = 1+ (h_{1}[p_{i}])\cdot t^{i}+(h_{2}[p_{i}])\cdot t^{2i} + \cdots
.
\end{align*}
We have
\begin{align*}
&\left(\prod_{j\geq 1} (1-y_j^{i}t^{i})^{-1}\right)^{lq(i)} \\
&\ \ \ \  = \left( 1+ (h_{1}[p_{i}]) t^{i}+(h_{2}[p_{i}]) t^{2i} +
\cdots\right)^{lq(i)}\\
&\ \ \ \  = \sum_{m_{i,1}+m_{i,2}+\cdots+m_{i,n}=m_{i}\atop 0\leq
m_i \leq lq(i)} {lq(i)\choose
m_{i}}{m_{i}\choose m_{i,1} m_{i,2} \cdots m_{i,n}}\\
&\ \ \ \ \qquad\qquad\quad \times
(h_{1}[p_{i}])^{m_{i,1}}(h_{2}[p_{i}])^{m_{i,2}}\cdots(h_{n}[p_{i}])^{m_{i,n}}\cdot
t^{(m_{i,1}+2m_{i,2}+\cdots+n\cdot
m_{i,n})\cdot i}\\
&\ \ \ \ = \sum_{m_{i,1}+m_{i,2}+\cdots+m_{i,n}=m_{i}\atop 0\leq m_i
\leq lq(i)} \rho_{(m_{i,1},\ldots, m_{i,n})}\cdot
t^{(m_{i,1}+2m_{i,2}+\cdots+n\cdot m_{i,n})\cdot i}
\end{align*}
From (\ref{eq:plechar}) we see that the coefficient of $t^n$ in the
product $\prod_{i\geq 1}\left(\prod_{j\geq
1}(1-y_j^{i}t^{i})^{-1}\right)^{lq(i)}$ is exactly equal to $\rho_n$
for $n\geq 1$. Thus we get the Theorem.
\end{proof}

\begin{theorem}\label{thm:lm2}
\begin{equation}\label{eq:lqt}
\prod_{i\geq 1}\prod_{j\geq 1}(1-y_j^{i}t^{i})^{-lq(i)} =
\frac{\prod_{j\geq 1}(1-y_j q t)^{-1}}{\prod_{j\geq 1}(1-y_j
t)^{-1}} \lower 15pt\hbox{.}
\end{equation}
\end{theorem}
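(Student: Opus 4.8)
The plan is to prove the identity one variable at a time. Fixing $j\geq 1$ and setting $u=y_jt$, I would show the single-variable identity
$$\prod_{i\geq 1}(1-u^i)^{-lq(i)} = \frac{1-u}{1-qu}$$
as a formal power series in $u$. The full statement then follows immediately by taking the product over all $j\geq 1$, since
$$\prod_{j\geq 1}\frac{1-y_jt}{1-y_jqt} = \frac{\prod_{j\geq 1}(1-y_jqt)^{-1}}{\prod_{j\geq 1}(1-y_jt)^{-1}},$$
which is exactly the right-hand side of~(\ref{eq:lqt}). All these infinite products are legitimate formal power series because every exponent $lq(i)$ is a fixed nonnegative integer and each factor contributes only in degrees $\geq i$, so the products converge in the $(y,t)$-adic topology.

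To prove the single-variable identity I would first separate out the exceptional behaviour at $i=1$. Since $lq(1)=L_q(1)-1=q-1$ while $lq(i)=L_q(i)$ for $i\geq 2$, one has
$$\prod_{i\geq 1}(1-u^i)^{-lq(i)} = (1-u)\prod_{i\geq 1}(1-u^i)^{-L_q(i)},$$
so it suffices to establish the classical cyclotomic identity $\prod_{i\geq 1}(1-u^i)^{-L_q(i)}=(1-qu)^{-1}$. Taking formal logarithms of the left-hand side and expanding,
$$\log\Big(\prod_{i\geq 1}(1-u^i)^{-L_q(i)}\Big) = \sum_{i\geq 1}L_q(i)\sum_{k\geq 1}\frac{u^{ik}}{k} = \sum_{m\geq 1}\frac{u^m}{m}\sum_{i\mid m} i\,L_q(i),$$
where the terms with $ik=m$ are collected and $1/k=i/m$ is used. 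Now substituting $i\,L_q(i)=\sum_{d\mid i}\mu(d)q^{i/d}$ and reindexing via $i=de$ gives
$$\sum_{i\mid m} i\,L_q(i) = \sum_{de\mid m}\mu(d)\,q^{e} = \sum_{e\mid m} q^{e}\sum_{d\mid m/e}\mu(d) = q^m,$$
the inner sum being $1$ when $e=m$ and $0$ otherwise. Hence the logarithm equals $\sum_{m\geq 1}(qu)^m/m = -\log(1-qu)$, which is the cyclotomic identity.

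Combining the two displays, $\prod_{i\geq 1}(1-u^i)^{-lq(i)} = (1-u)(1-qu)^{-1}$; substituting $u=y_jt$ and multiplying over $j\geq 1$ yields~(\ref{eq:lqt}). The only real content is the arithmetic identity $\sum_{i\mid m} i\,L_q(i)=q^m$, which is therefore the main step of the argument; it rests on nothing deeper than the defining property $\sum_{d\mid k}\mu(d)=\delta_{k,1}$ of the M\"obius function, and everything else is formal power series bookkeeping. I do not anticipate any genuine obstacle — the proof is short, and the one point requiring slight care is simply remembering the $i=1$ correction term $l_q(1)=L_q(1)-1$ coming from the condition $f(0)\neq 0$ on irreducible polynomials.
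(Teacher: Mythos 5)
Your proof is correct and follows essentially the same route as the paper: separate off the $i=1$ correction $l_q(1)=L_q(1)-1$, take formal logarithms, and reduce everything to the identity $\sum_{i\mid m} i\,L_q(i)=q^m$. The only difference is that you actually derive that identity from the M\"obius formula for $L_q(i)$ (the paper simply invokes it mid-computation), and you work one variable $u=y_jt$ at a time, which is cosmetic.
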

\begin{proof}
The above identity is equivalent to the identity
\begin{equation}\label{eq:qt}
\prod_{i\geq 1}\prod_{j\geq 1}(1-y_j^{i}t^{i})^{Lq(i)} =
\prod_{j\geq 1}(1-y_j q t),
\end{equation}
where $Lq(i)$ denotes the number of irreducible polynomials over
$\mathbb F_q$ for $i\geq 1$ as we stated before. To prove
(\ref{eq:qt}), we take the logarithm on both sides of (\ref{eq:qt})
and show they are equal.
\begin{align*}
\ln\left(\prod_{i\geq 1}\prod_{j\geq
1}(1-y_j^{i}t^{i})^{Lq(i)}\right) &= \sum_{j\geq 1}\left(\sum_{i\geq
1}
Lq(i)\ln(1-y_j^{i}t^{i})\right)\\
&= \sum_{j\geq 1}\left(\sum_{i\geq 1} Lq(i)\left(\sum_{r\geq 1}
\frac{(y_j^i t^i)^{r}}{r} \right)\right)\\
&= \sum_{j\geq 1}\left(\sum_{i\geq 1} \sum_{r\geq 1}
Lq\left(\frac{i\cdot r}{r}\right)\cdot \frac{i\cdot r}{r}\cdot
\frac{y_j^{(i\cdot r)}\cdot t^{(i\cdot r)}}{i\cdot r} \right)\\
&=\sum_{j\geq 1}\left(\sum_{N\geq 1\atop N=i\cdot r}\frac{
y_j^{(N)}\cdot t^{(N)}}{N}\left( \sum_{r|N}
Lq\left(\frac{N}{r}\right)\cdot \frac{N}{r} \right)\right)\\
&=\sum_{j\geq 1}\left(\sum_{N\geq 1\atop N=i\cdot r}\frac{
y_j^{(N)}\cdot t^{(N)}}{N}\cdot q^{N}\right)\\
&=\sum_{j\geq 1}\left(\ln(1-y_j q t)\right) = \ln\left(\prod_{j\geq
1}(1-y_j q t)\right)
\end{align*}
Thus we get (\ref{eq:qt}).
\end{proof}
Theorem~(\ref{thm:lm1}) and Theorem~(\ref{thm:lm2}) together yield
the formula for the generating function of $\rho_n$ as follows:
\begin{equation}\label{eq:gen}
CH(t) = \prod_{j\geq 1}\frac{1-y_j t}{1-y_j q t}\, \lower
8pt\hbox{.}
\end{equation}
Before we link it to Hall-Littlewood polynomials, we give a
recurrence relation for $\rho_n$ using formula~(\ref{eq:gen}).

\begin{corollary}\label{col:thmrec}
For every $n\geq 1$, we have
\begin{equation}\label{eq:rec}
\rho_n = (q^n-1) h_{n} - \rho_{n-1} h_{1} - \rho_{n-2} h_{2} \cdots
- \rho_1 h_{n-1} .
\end{equation}
\end{corollary}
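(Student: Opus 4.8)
The plan is to read off the recurrence directly from the generating-function identity~(\ref{eq:gen}). Write $CH(t)=\sum_{n\ge 0}\rho_n t^n$ and recall the generating function $H(Y;t)=\sum_{n\ge 0}h_n t^n=\prod_{j\ge 1}(1-y_jt)^{-1}$. The key observation is that $\prod_{j\ge 1}(1-y_jqt)=H(Y;qt)^{-1}$, because specializing $t\mapsto qt$ in $H$ turns $\prod_j(1-y_jt)^{-1}$ into $\prod_j(1-y_jqt)^{-1}$, and $h_n[p_1\text{ evaluated at }qt]$ is just $q^nh_n t^n$. Hence (\ref{eq:gen}) can be rewritten as
\begin{equation*}
CH(t)=\frac{\prod_{j\ge1}(1-y_jt)}{\prod_{j\ge1}(1-y_jqt)}
      =\frac{H(Y;qt)}{H(Y;t)},
\end{equation*}
or equivalently, clearing the denominator,
\begin{equation*}
CH(t)\cdot H(Y;t)=H(Y;qt).
\end{equation*}

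Next I would extract the coefficient of $t^n$ from both sides of $CH(t)\,H(Y;t)=H(Y;qt)$ for $n\ge 1$. The left-hand side gives $\sum_{k=0}^{n}\rho_k h_{n-k}=\rho_n+\rho_{n-1}h_1+\rho_{n-2}h_2+\cdots+\rho_1 h_{n-1}+\rho_0 h_n$, using $\rho_0=1$. The right-hand side is the coefficient of $t^n$ in $\sum_{m\ge0}h_m q^m t^m$, namely $q^n h_n$. Equating the two and solving for $\rho_n$ yields
\begin{equation*}
\rho_n = q^n h_n - h_n - \rho_{n-1}h_1-\rho_{n-2}h_2-\cdots-\rho_1h_{n-1}
       = (q^n-1)h_n-\rho_{n-1}h_1-\cdots-\rho_1 h_{n-1},
\end{equation*}
which is exactly~(\ref{eq:rec}).

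There is no real obstacle here; the only point requiring a moment's care is the identification of $\prod_{j\ge1}(1-y_jqt)$ with $H(Y;qt)^{-1}$ and the fact that $[t^n]H(Y;qt)=q^nh_n$, which follows from the substitution $t\mapsto qt$ being a ring homomorphism on $\Lambda_{\mathbb C}(Y)[[t]]$ fixing the $y_j$. One should also note that the manipulation is legitimate as an identity of formal power series in $t$ with coefficients in $\Lambda_{\mathbb C}(Y)$, since both $H(Y;t)$ and $CH(t)$ have constant term $1$ and are therefore invertible. Everything else is a routine Cauchy-product coefficient comparison.
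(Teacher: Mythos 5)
Your proof is correct and follows essentially the same route as the paper: multiply the generating-function identity~(\ref{eq:gen}) by $H(Y;t)$ to get $CH(t)\,H(Y;t)=\prod_{j\geq 1}(1-y_jqt)^{-1}=H(Y;qt)$, then compare coefficients of $t^n$ and solve for $\rho_n$. The extra care you take in justifying $[t^n]H(Y;qt)=q^nh_n$ and the invertibility of the power series is fine but not a departure from the paper's argument.
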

\begin{proof}
From~(\ref{eq:gen}) we have
\begin{equation*}
CH(t)\times H(t) = \prod_{j\geq 1}(1-y_j q t)^{-1} .
\end{equation*}
Compare the coefficients of $t^n$ on both sides we get
\begin{equation*}
\rho_0 h_{n}+\rho_1 h_{n-1}+\cdots +\rho_n h_{0} = q^n h_{n},
\end{equation*}
which yields the theorem.
\end{proof}

\begin{example}
\begin{align*}
\rho_1 &= (q-1) h_1 ;\\
\rho_2 &= (q^2-1) h_2 - \rho_1 h_{1} \\
        &= (q^2-1) h_2 - (q-1) h_{1,1}\\
        &= (q-1)[(q+1) h_2 - h_{1,1}] ;\\
\rho_3 &= (q^3-1) h_3 - \rho_1 h_{2} - \rho_2 h_{1}\\
        &= (q^3-1) h_3 - (q-1) h_{2,1} - (q^2-1) h_{2,1} + (q-1) h_{1,1,1}\\
        &= (q-1)[(q^2+q+1) h_3 -(q+2) h_{2,1} + h_{1,1,1}] .\\
\end{align*}
\end{example}
From the above examples we notice that the coefficients of
$h_{\lambda}$ are in $\pm\mathbb N[q]\times (q-1)$.  Let
$[h_{\lambda}]\rho_n$ denote the coefficients of $h_{\lambda}$ in
the expansion of $\rho_n$. In particular we have $[h_{n}]\rho_n =
q^n-1$ for all $n\geq 1$. The following corollary gives the
recurrence relation on the coefficients.

\begin{corollary}\label{col:rec}
For any $\lambda = (a_1^{l_1}, a_2^{l_2}, \ldots, a_k^{l_k}) \vdash
n$ with $l_i \geq 1$ for all $1\leq i \leq k$ and $l(\lambda)\geq
2$, we have
\begin{equation}\label{eq:reccoeff}
[h_{\lambda}]\rho_n = -[h_{(a_1^{l_1-1}, a_2^{l_2}, \ldots,
a_k^{l_k})}]\rho_{n-a_1} - \cdots -[h_{(a_1^{l_1}, a_2^{l_2},
\ldots, a_k^{l_k-1})}]\rho_{n-a_k}.
\end{equation}
Here if $l_i=1$ for some $1\leq i\leq k$, then we set
$$(a_1^{l_1}, \ldots, a_i^{l_i-1}, \ldots, a_k^{l_k}):= (a_1^{l_1}, \ldots, \hat{a_i}, \ldots, a_k^{l_k}) , $$
where $\hat{a_i}$ means simply remove $a_i$ from the partition
$\lambda$. In particular, $[h_{\lambda}]\rho_n\in \pm\mathbb
N[q]\times(q-1)$ while the sign is given by $(-1)^{l(\lambda)-1}$.
\end{corollary}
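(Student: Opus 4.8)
The plan is to read off the recurrence (\ref{eq:reccoeff}) directly from the identity of Corollary~\ref{col:thmrec} by tracking how the complete-homogeneous basis behaves under multiplication, and then to settle the "in particular" statement by an induction on the length $l(\lambda)$. Throughout I use that $\{h_\mu : \mu \in \mathbb{P}\}$ is a $\mathbb{C}$-basis of $\Lambda_{\mathbb C}(Y)$, that in this basis $h_\mu h_\nu = h_{\mu \cup \nu}$ where $\mu \cup \nu$ denotes the partition whose parts are those of $\mu$ and $\nu$ together, and that $\rho_m$ is homogeneous of degree $m$ (since $h_a[p_b]$ has degree $ab$ and every monomial on the right of (\ref{eq:char}) has total degree $n$), so that $[h_\mu]\rho_m = 0$ unless $\mu \vdash m$.

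First I would apply the coefficient functional $[h_\lambda]$ to the identity $\rho_n = (q^n-1)h_n - \sum_{j=1}^{n-1}\rho_{n-j}h_j$ from Corollary~\ref{col:thmrec}. Since $l(\lambda) \geq 2$ we have $\lambda \neq (n)$, so the term $(q^n-1)h_n$ contributes nothing. Writing $\rho_{n-j} = \sum_{\mu \vdash n-j}([h_\mu]\rho_{n-j})\,h_\mu$ gives $\rho_{n-j}h_j = \sum_{\mu \vdash n-j}([h_\mu]\rho_{n-j})\,h_{\mu \cup (j)}$, and $h_{\mu \cup (j)} = h_\lambda$ holds for at most one $\mu$ — namely $\lambda$ with one part equal to $j$ deleted — and only when $j$ is one of the part sizes $a_1,\ldots,a_k$ of $\lambda$. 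Hence the sum over $j \in \{1,\ldots,n-1\}$ collapses to a sum over the distinct part sizes $a_i$, and $[h_\lambda](\rho_{n-a_i}h_{a_i}) = [h_{(a_1^{l_1},\ldots,a_i^{l_i-1},\ldots,a_k^{l_k})}]\rho_{n-a_i}$ with exactly the deletion convention stated for $l_i = 1$. Since $n - a_i$ is the size of $(a_1^{l_1},\ldots,a_i^{l_i-1},\ldots,a_k^{l_k})$, this is precisely (\ref{eq:reccoeff}).

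For the last assertion I would induct on $l(\lambda)$. The base case $l(\lambda) = 1$, i.e. $\lambda = (n)$, is the already-noted fact $[h_n]\rho_n = q^n - 1 = (q-1)(1 + q + \cdots + q^{n-1})$, a nonzero element of $\mathbb{N}[q]\cdot(q-1)$ of sign $(-1)^{1-1} = +1$. For $l(\lambda) \geq 2$, every partition appearing on the right of (\ref{eq:reccoeff}) has length $l(\lambda) - 1 \geq 1$ and size $n - a_i \geq 1$ (each part of $\lambda$ is strictly smaller than $n$ once $l(\lambda) \geq 2$), so it is never empty and the induction hypothesis applies: each summand $[h_{\lambda^{(i)}}]\rho_{n-a_i}$ is a nonzero element of $(-1)^{l(\lambda)-2}\mathbb{N}[q]\cdot(q-1)$. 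All these summands carry the same sign, so no cancellation can occur among them; multiplying their sum by $-1$ then places $[h_\lambda]\rho_n$ in $(-1)^{l(\lambda)-1}\mathbb{N}[q]\cdot(q-1)$ and keeps it nonzero, completing the induction.

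The derivation is short, and rather than a genuine obstacle there is one bookkeeping point to handle with care: the coefficient extraction in the second paragraph, where one must verify that for each $j$ at most one monomial $h_\mu$ of $\rho_{n-j}$ yields $h_\lambda$ after multiplication by $h_j$ — so that there is neither overcounting nor cancellation — and that this occurs precisely when $j$ is a part of $\lambda$, in agreement with the "remove one copy of $a_i$" convention of the statement. Once this is checked, the recurrence (\ref{eq:reccoeff}) is immediate and the positivity/sign claim follows from the routine induction above.
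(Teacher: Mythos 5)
Your proposal is correct and follows the same route as the paper: extract the coefficient of $h_\lambda$ from the recurrence of Corollary~\ref{col:thmrec}, observe that only the terms $\rho_{n-a_i}h_{a_i}$ with $a_i$ a part of $\lambda$ contribute (each exactly once, via deleting one copy of $a_i$), and then establish the sign and positivity claim by induction on $l(\lambda)$ starting from $[h_n]\rho_n=q^n-1$. You supply the bookkeeping details (homogeneity of $\rho_m$, uniqueness of the contributing $\mu$ for each $j$, and the no-cancellation observation that makes the sign claim non-vacuous) that the paper's proof leaves implicit, but the argument is essentially identical.
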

\begin{proof}
Equation~(\ref{eq:reccoeff}) follows directly from
Corollary~\ref{col:thmrec} by comparing the coefficients of
$h_{\lambda}$ from two sides. The claim that $[h_{\lambda}]\rho_n$
is in $\pm\mathbb N[q]\times(q-1)$ together with the sign property
can be proved easily by using induction method on
equation~(\ref{eq:reccoeff}).
\end{proof}

\begin{remark}
Corollary~\ref{col:thmrec} and Corollary~\ref{col:rec} give an easy
way of computing $\rho_n$ for every $n\geq 1$ simply by knowing
$[h_i]\rho_i=q^i-1$ for every $i\geq 1$.
\end{remark}

\begin{example}
\begin{align*}
[h_{2,1}]\rho_3& = -[h_{1}]\rho_1 - [h_{2}]\rho_2\\
                & = - (q-1) - (q^2-1) \\
                & = -(q-1)(q+2)
\end{align*}
\begin{align*}
[h_{1,1,1}]\rho_3& = -[h_{1,1}]\rho_2 =[h_{1}]\rho_1\\
                  & = q-1
\end{align*}
\end{example}

Now back to our formula~(\ref{eq:gen}). We rewrite it into the
following form so that we can easily use the generating function for
$q_r$ as in equation~(\ref{eq:genqr}).
\begin{align*}
CH(t) &= \prod_{j\geq 1}\frac{1-y_j t}{1-y_j q t} = \prod_{j\geq
1}\frac{1-y_j\cdot \frac{1}{q}\cdot (q t)}{1-y_j\cdot (q t)} \\
      &=\sum_{r\geq 0} q_r(Y;q^{-1}) q^{r} t^{r}  \lower
5pt\hbox{,}
\end{align*}
where $Y=\{y_1, y_2, \ldots\}$. Comparing the coefficients from two
sides we get the following corollary.
\begin{corollary}\label{col:halli}
\begin{align}\label{eq:HL}
\rho_n &=  q_n(Y;q^{-1}) q^{n} = (1-q^{-1})P_n\left(Y;q^{-1}\right) q^{n}\nonumber\\
        &= q^{n-1}(q-1) P_n\left(Y;q^{-1}\right)= q^{n-1}(q-1) \tilde{P}_n\left(Y;q\right).
\end{align}
\end{corollary}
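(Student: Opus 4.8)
The plan is to read the corollary off directly from the product formula~(\ref{eq:gen}) for the generating function $CH(t)$ by matching it against the generating function~(\ref{eq:genqr}) for the functions $q_r(Y;t)$, and then translating into Hall--Littlewood and twisted Hall--Littlewood language via the definitions recorded in Section~2.

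First I would rewrite~(\ref{eq:gen}) so that the Hall--Littlewood generating function becomes visible. Writing
\[
CH(t)=\prod_{j\geq 1}\frac{1-y_j t}{1-y_j q t}
      =\prod_{j\geq 1}\frac{1-y_j q^{-1}(qt)}{1-y_j(qt)},
\]
and comparing with $Q(u)=\sum_{r\geq 0}q_r(Y;t)u^r=\prod_i\frac{1-y_i t u}{1-y_i u}$ from~(\ref{eq:genqr}), I see that the right-hand side is exactly $Q(u)$ with the Hall--Littlewood parameter specialized to $q^{-1}$ and with $u=qt$. Hence
\[
CH(t)=\sum_{r\geq 0}q_r(Y;q^{-1})\,(qt)^r=\sum_{r\geq 0}q_r(Y;q^{-1})\,q^r t^r .
\]

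Next, since $CH(t)=\sum_{n\geq 0}\rho_n t^n$ by the definition in Theorem~\ref{thm:lm1}, comparing coefficients of $t^n$ gives $\rho_n=q_n(Y;q^{-1})\,q^n$, which is the first equality in~(\ref{eq:HL}). The second equality is the defining relation $q_r(Y;t)=(1-t)P_{(r)}(Y;t)$ evaluated at $t=q^{-1}$, and the trivial rearrangement $q^n(1-q^{-1})=q^{n-1}(q-1)$ yields the third. For the last equality I would invoke the definition $\tilde P_\lambda(Y;q)=q^{-n(\lambda)}P_\lambda(Y;q^{-1})$, noting that for the one-part partition $\lambda=(n)$ one has $n(\lambda)=\sum_{i\geq 1}(i-1)\lambda_i=0$, so that $\tilde P_n(Y;q)=P_n(Y;q^{-1})$ and therefore $\rho_n=q^{n-1}(q-1)\tilde P_n(Y;q)$.

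There is essentially no hard step: the whole argument is a generating-function identification together with elementary bookkeeping. The only place that needs care is the substitution $u=qt$ with Hall--Littlewood parameter $q^{-1}$ in~(\ref{eq:genqr}): one must keep the formal variable $t$ of $CH$ distinct from the Hall--Littlewood parameter, and make sure the factor $q^r$ coming from $u^r=(qt)^r$ is correctly transferred onto $\rho_n$. Everything else is forced.
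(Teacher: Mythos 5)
Your proposal is correct and follows essentially the same route as the paper: the paper likewise rewrites the product formula for $CH(t)$ as $\prod_{j\geq 1}\bigl(1-y_j q^{-1}(qt)\bigr)/\bigl(1-y_j(qt)\bigr)$, matches it against the generating function $Q(u)$ with parameter $q^{-1}$ and $u=qt$ to get $CH(t)=\sum_{r\geq 0}q_r(Y;q^{-1})q^r t^r$, and compares coefficients. Your additional remarks on the definitional steps $q_r=(1-t)P_{(r)}$ and $n((n))=0$ for the twisted Hall--Littlewood function are exactly the bookkeeping implicit in the paper's statement of the corollary.
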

Corollary~\ref{col:halli} gives the connection between the plethysm
of the characteristic map of
$\chi^{(n)}_{(q_1,\ldots,q_{n-1})}\uparrow_{U_n}^{G_n}$ and the
Hall-littlewood symmetric functions.

For any linear supercharacter~\cite{Nat2, Nat, MulAu} of $U_n$,
there is a unique way to decompose the indexed set partition into
connected components. For a linear supercharacter with $k$ connected
components, we can denote it by $\chi^{n_1 | n_2 | \ldots |
n_k}_{\vec{q_1},\ldots,\vec{q_{k}}}$ where for $i$ from $1$ to $k$,
each $n_i$ counts the size of the $i^{th}$ connected component and
$\vec{q_i}=(q_{i,1}, \ldots, q_{i,n_i-1})\in (\mathbb
F_q^{\times})^{n_i-1}$ denotes the labels of the arcs for the
$i^{th}$ connected component. The following corollary follows from
the property of the linear supercharacters~\cite{Nat2, Nat, MulAu}.
\begin{corollary}\label{col:multilichar}
$$\rho\circ ch(\chi^{n_1 | n_2 | \ldots |
n_k}_{\vec{q_1},\ldots,\vec{q_{k}}}\uparrow_{U_n}^{G_n}) =
\prod_{i=1}^k \rho_{n_i}.$$
\end{corollary}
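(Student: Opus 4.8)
The plan is to reduce the multi-component case to the single-component result established in Corollary~\ref{col:halli}, using the compatibility of the characteristic map with induction from parabolic (here, block-diagonal unipotent) subgroups. The key structural fact is that for a linear supercharacter $\chi^{n_1|n_2|\ldots|n_k}_{\vec{q_1},\ldots,\vec{q_k}}$ whose indexed set partition decomposes into $k$ connected components of sizes $n_1,\ldots,n_k$, the corresponding character is (up to the usual identifications) the external tensor product $\chi^{(n_1)}_{\vec{q_1}}\boxtimes\cdots\boxtimes\chi^{(n_k)}_{\vec{q_k}}$ of the single-component linear characters of $U_{n_1},\ldots,U_{n_k}$, inflated along the block decomposition $U_{n_1}\times\cdots\times U_{n_k}\hookrightarrow U_{n_1+\cdots+n_k}$. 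Inducing in stages, first up to the Levi-type subgroup $GL_{n_1}\times\cdots\times GL_{n_k}$ and then to $G_n$, gives
$$
\chi^{n_1|\ldots|n_k}_{\vec{q_1},\ldots,\vec{q_k}}\uparrow_{U_n}^{G_n}
= \bigl(\chi^{(n_1)}_{\vec{q_1}}\uparrow_{U_{n_1}}^{G_{n_1}}\bigr)
\circ \cdots \circ \bigl(\chi^{(n_k)}_{\vec{q_k}}\uparrow_{U_{n_k}}^{G_{n_k}}\bigr),
$$
where $\circ$ denotes the Harish-Chandra / parabolic induction product on $A=\oplus_m A_m$.

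The second step is to invoke the fact (Green, Macdonald; stated in the characteristic-map theorem of the excerpt) that $ch\colon A\to\Lambda_{\mathbb C}$ is a Hopf algebra isomorphism, so in particular it sends the induction product $\circ$ to the ordinary product in $\Lambda_{\mathbb C}$. Hence
$$
ch\bigl(\chi^{n_1|\ldots|n_k}_{\vec{q_1},\ldots,\vec{q_k}}\uparrow_{U_n}^{G_n}\bigr)
= \prod_{i=1}^k ch\bigl(\chi^{(n_i)}_{\vec{q_i}}\uparrow_{U_{n_i}}^{G_{n_i}}\bigr).
$$
Then I would apply the plethysm map $\rho$. The only remaining point is that $\rho$ is a ring homomorphism on its domain: by definition $\rho$ sends $h_a(Y^\varphi)\mapsto h_a[p_b]$ for $b=\deg(\varphi)$, and since the $S_{\bm\lambda}$ are products of $h_a(Y^\varphi)$'s over distinct $\varphi$, $\rho$ is the algebra map induced by $Y^\varphi\mapsto\{y_1^{\deg\varphi},y_2^{\deg\varphi},\ldots\}$; in particular it is multiplicative. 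Applying $\rho$ to the displayed product and using the definition $\rho_{n_i}=\rho(ch(\chi^{(n_i)}_{(\ldots)}\uparrow_{U_{n_i}}^{G_{n_i}}))$ yields exactly $\rho\circ ch(\chi^{n_1|\ldots|n_k}_{\vec{q_1},\ldots,\vec{q_k}}\uparrow_{U_n}^{G_n})=\prod_{i=1}^k\rho_{n_i}$, as claimed.

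The main obstacle is the first step: justifying carefully that the induced Gelfand–Graev-type character of a linear supercharacter with $k$ connected components factors as an induction product of the single-component pieces. This requires (i) identifying the stabilizer structure so that the block-diagonal subgroup $U_{n_1}\times\cdots\times U_{n_k}$ is exactly where the linear character "lives", with no arcs crossing between blocks, and (ii) invoking transitivity of induction through the parabolic with Levi $GL_{n_1}\times\cdots\times GL_{n_k}$, noting that the unipotent radical of that parabolic is absorbed when inducing the inflated character. Both facts are standard in the supercharacter literature (Thiem~\cite{Nat2}, and the references~\cite{Nat, MulAu}), so I would state them with a citation rather than reprove them; everything after that is a formal consequence of $ch$ being a Hopf algebra map and $\rho$ being a ring map.
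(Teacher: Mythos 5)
Your argument is correct, and it is essentially the argument the paper is implicitly relying on: the paper offers no proof of Corollary~\ref{col:multilichar} beyond the remark that it ``follows from the property of the linear supercharacters'' with citations to \cite{Nat2, Nat, MulAu}. The chain you describe --- the $k$-component linear supercharacter is the inflation to $U_n$ of $\chi^{(n_1)}_{\vec{q_1}}\boxtimes\cdots\boxtimes\chi^{(n_k)}_{\vec{q_k}}$ along $U_n\twoheadrightarrow U_{n_1}\times\cdots\times U_{n_k}$; induction in stages through the standard parabolic turns the induced character into the Harish-Chandra product of the $\chi^{(n_i)}_{\vec{q_i}}\uparrow_{U_{n_i}}^{G_{n_i}}$; the Hopf algebra isomorphism $ch$ converts that product into multiplication in $\Lambda_{\mathbb C}$; and $\rho$ is multiplicative because plethysm by $p_b$ is an algebra map on each $\Lambda_{\mathbb C}(Y^{\varphi})$ --- is exactly the standard route and correctly supplies the details the paper delegates to the references.
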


\begin{example} For the following linear supercharacter of $U_6$
\begin{align*}
\raise 2pt \hbox{$\chi^{1 | 2 |
3}_{\vec{q_1},\vec{q_2},\vec{q_{3}}}$ = } \chi^{\begin{tikzpicture}
\filldraw [black] (0,0) circle (1.5pt) (.5,0) circle (1.5pt) (1,0)
circle (1.5pt) (1.5,0) circle (1.5pt) (2,0) circle (1.5pt) (2.5,0)
circle (1.5pt); \node at (0, -.2) {$\scscs 1$}; \node at (0.5, -.2)
{$\scscs 2$}; \node at (0.75, .35) {$\scs q_{2,1}$}; \node at (1,
-.2) {$\scscs 3$}; \node at (1.5, -.2) {$\scscs 4$}; \node at (1.75,
.35) {$\scs q_{3,1}$};\node at (2, -.2) {$\scscs 5$}; \node at
(2.25, .35) {$\scs q_{3,2}$};\node at (2, -.2) {$\scscs 5$}; \node
at (2.5, -.2) {$\scscs 6$}; \draw (0.5,0) .. controls (.75,.25) ..
(1,0); \draw (1.5,0) .. controls (1.75,.25) .. (2,0); \draw (2,0) ..
controls (2.25,.25) .. (2.5,0);
\end{tikzpicture}}
\end{align*}
where $\vec{q_1}=0, \vec{q_2}=(q_{2,1}),
\vec{q_{3}}=(q_{3,1},q_{3,2})$ and $q_{2,1}, q_{3,1},
q_{3,2}\in\mathbb F_q^{\times}$, we have
$$\rho\circ ch(\chi^{1 | 2 |
3}_{\vec{q_1},\vec{q_2},\vec{q_{3}}}\uparrow_{U_6}^{G_6}) =
\rho_1\rho_2\rho_3.$$
\end{example}

Let the transition matrix between $\{m_{\lambda}(X)\}_{\lambda\vdash
n} $ and  $\{p_{\mu}(X)\}_{\mu\vdash n} $ be $C_{\lambda, \mu}$,
i.e.
\begin{equation*}
m_{\lambda}(X)=\sum_{\mu} C_{\lambda, \mu} p_{\mu}(X).
\end{equation*}
Define $m_{\lambda}(q-1)$ by the following equation
\begin{equation*}
m_{\lambda}(q-1)=\sum_{\mu} C_{\lambda, \mu} p_{\mu}(q-1),
\end{equation*}
where $p_{n}(q-1) = q^n-1$ for every $n\geq 1$, and $p_{\mu}(q-1) =
p_{\mu_1}(q-1)\cdots p_{\mu_l}(q-1)$ for
$\mu=\{\mu_1,\ldots,\mu_l\}$.

\begin{remark}
Using the orthogonal relation between the bases $\{m_{\lambda}\}$
and $\{h_{\mu}\}$, we give another expression for $\rho_n$ as
follows:
\begin{equation*}
\rho_n = \sum_{\lambda\vdash n}m_{\lambda}(q-1)\cdot h_{\lambda}(Y).
\end{equation*}
\end{remark}
\begin{proof}
Using the notation in Section~\ref{eq:omg}, we have
\begin{equation*}
\Omega(Yqt) = \prod_{j\geq 1}\frac{1}{1-y_j q t} , \ \  \Omega(-Yt)
= \prod_{j\geq 1}(1-y_j t).
\end{equation*}
\begin{align*}
CH(t) &= \prod_{j\geq 1}\frac{1-y_j t}{1-y_j q t} \\
      &= \Omega [(q-1)Yt]\\
      &= \sum_{n\geq 0}\left(\sum_{\lambda\vdash n}m_{\lambda}(q-1)\cdot h_{\lambda}(Y)
\right) t^n \, \lower 8pt\hbox{.}
\end{align*}
\end{proof}

It seems that we lose much information by taking plethysms on the
characteristic map of
$\chi^{(n)}_{(q_1,\ldots,q_{n-1})}\uparrow_{U_n}^{G_n}$. However if
we only consider the induced characters from $U_n$ to $G_n$, we can
express the characteristic map of the induced characters in basis
$\{\tilde P_{\bm{\mu}} | \bm{\mu}\in\mathcal{P}^{\Phi} \}$ from the
results of doing plethysms. To show this fact, we first introduce
the following homomorphism defined in~\cite{Mac}:
$$\omega: \Lambda_{\mathbb C}(Y) \rightarrow \Lambda_{\mathbb C}(Y)$$
by
$$\omega(e_r(Y))=h_r(Y),\ \text{for\ all}\ r\geq 0.$$

\begin{lemma}{\rm (\cite{Mac})}\label{lem:involution}
$\omega$ is an involution and automorphism on $\Lambda_{\mathbb
C}(Y)$. Also, we have
$$\omega(p_r(Y))=(-1)^{r-1}p_r(Y),\ \text{for\ all}\ r\geq 0.$$
\end{lemma}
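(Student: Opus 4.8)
The plan is to prove the statement in two stages: first that $\omega$ is a well-defined ring automorphism and an involution, and then to compute its value on the power sums.

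For the first stage, note that $\Lambda_{\mathbb C}(Y)$ is a polynomial algebra over $\mathbb C$ on the algebraically independent generators $e_1(Y), e_2(Y), \ldots$, so the assignment $e_r \mapsto h_r$ extends uniquely to a ring endomorphism $\omega$, and it suffices to check $\omega(h_r) = e_r$ for all $r \ge 1$: this makes $\omega^2$ fix every $e_r$, hence $\omega^2 = \mathrm{id}$, and in particular $\omega$ is bijective, i.e. an automorphism. I would prove $\omega(h_r) = e_r$ by induction on $r$ using Newton's relations. Writing $E(Y;t) = \sum_{n \ge 0} e_n(Y) t^n = \prod_j (1 + y_j t)$, the identity $H(Y;t)\, E(Y;-t) = 1$ gives $\sum_{r=0}^{n} (-1)^r e_r h_{n-r} = 0$ for all $n \ge 1$, and applying the substitution $t \mapsto -t$ also gives the companion relation $\sum_{r=0}^{n} (-1)^r h_r e_{n-r} = 0$. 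Applying the ring homomorphism $\omega$ to the first relation and solving for $\omega(h_n)$ yields $\omega(h_n) = \sum_{r=1}^{n} (-1)^{r-1} h_r\, \omega(h_{n-r})$; substituting the inductive hypothesis $\omega(h_m) = e_m$ for $m < n$ (with $\omega(h_0) = 1 = e_0$ as the base case) turns the right-hand side into $\sum_{r=1}^{n} (-1)^{r-1} h_r e_{n-r}$, which equals $e_n$ by the companion relation.

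For the second stage I would pass to generating functions and take formal logarithms. From $H(Y;t) = \prod_j (1 - y_j t)^{-1}$ one obtains $\log H(Y;t) = \sum_{r \ge 1} p_r(Y)\, t^r / r$, while from $E(Y;t) = \prod_j (1 + y_j t)$ one obtains $\log E(Y;t) = \sum_{r \ge 1} (-1)^{r-1} p_r(Y)\, t^r / r$. Each coefficient of $\log H(Y;t)$ is a polynomial in finitely many of the $h_n$, so applying $\omega$ coefficient-wise is legitimate; since $\omega$ is a ring homomorphism it commutes with the formal operations and, by the first stage, it sends the series $H(Y;t)$ to $E(Y;t)$ and hence $\log H(Y;t)$ to $\log E(Y;t)$. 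Comparing the coefficient of $t^r$ on the two sides gives $\omega(p_r) = (-1)^{r-1} p_r$. Alternatively one can avoid logarithms and argue from the recursions $n h_n = \sum_{r=1}^{n} p_r h_{n-r}$ and $n e_n = \sum_{r=1}^{n} (-1)^{r-1} p_r e_{n-r}$, which come from the logarithmic derivatives of $H$ and $E$, by an induction of the same flavour as in the first stage.

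There is no genuine obstacle here; the result is classical and appears in \cite{Mac}. The only points deserving a word of care are the well-definedness of $\omega$, which relies on the algebraic independence of the $e_r$, and the observation that applying $\omega$ to the coefficients of a formal power series is compatible with taking products and hence with $\log$ of a series with constant term $1$ — this is exactly the statement that $\omega$ is a ring homomorphism. The case $r = 0$ in the displayed formula is vacuous, since $p_0$ does not lie in $\Lambda_{\mathbb C}(Y)$, so the content is the assertion for $r \ge 1$.
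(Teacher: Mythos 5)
Your proof is correct. The paper itself offers no proof of this lemma --- it is quoted directly from Macdonald \cite{Mac} --- and your argument (defining $\omega$ on the algebraically independent generators $e_r$, deducing $\omega(h_r)=e_r$ by induction from the relation $\sum_{r=0}^{n}(-1)^r e_r h_{n-r}=0$ so that $\omega^2=\mathrm{id}$, and then reading off $\omega(p_r)=(-1)^{r-1}p_r$ by applying $\omega$ to the logarithms of the generating functions $H(Y;t)$ and $E(Y;t)$) is precisely the standard one found there, with the well-definedness and formal-series issues handled appropriately.
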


The following theorem illustrates the relation between the
application of plethysms on the characteristic map in basis
$\{S_{\bm{\lambda}} | \mathbf{\bm{\lambda}}\in\mathcal{P}^{\Theta}
\}$ and the characteristic map in basis $\{\tilde P_{\bm{\mu}} |
\bm{\mu}\in\mathcal{P}^{\Phi} \}$.
\begin{theorem}\label{thm:comm}
The following diagram commutes:
\input xy
\xyoption{all}
\begin{displaymath}
\xymatrix{
  \otimes_{\varphi\in\Theta} \Lambda_{\mathbb C}(Y^{\varphi}) \ar[d]_{\rho} \ar[r]^{T}
                & \otimes_{f\in\Phi} \Lambda_{\mathbb C}(X_f) \ar[d]^{\Pi |_{\Lambda_{\mathbb C}(X_{f=x-1})}}  \\
 \Lambda_{\mathbb C}(Y)           \ar[r]_{t\circ\omega}
                & \Lambda_{\mathbb C}(X_{x-1})}
\end{displaymath}
where $T$ is the map of transformation from basis
$\{S_{\bm{\lambda}} | \mathbf{\bm{\lambda}}\in\mathcal{P}^{\Theta}
\}$ to basis $\{\tilde P_{\bm{\mu}} | \bm{\mu}\in\mathcal{P}^{\Phi}
\}$, $t$ is the map of changing variables $y_i$ into $X_{i,x-1}$ for
$i=1,2,\ldots$, and $\Pi |_{\Lambda_{\mathbb C}(X_{f=x-1})}$ is the
projection to the space $\Lambda_{\mathbb C}(X_{x-1})$.
\end{theorem}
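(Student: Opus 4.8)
The plan is to exploit the fact that all four arrows of the square are homomorphisms of $\mathbb C$-algebras, which reduces commutativity to checking one identity on a generating set. First I would record the algebraic nature of the maps. The arrow $T$ is nothing but the canonical identification of the ring $\Lambda_{\mathbb C}$ presented in the ``$Y^{\varphi}$-coordinates'' with the same ring presented in the ``$X_f$-coordinates''; on power sums this identification is exactly formula~(\ref{eq:transform}). By Lemma~\ref{lem:involution}, $\omega$ is an algebra automorphism of $\Lambda_{\mathbb C}(Y)$; the substitution $t\colon y_i\mapsto X_{i,x-1}$ is an algebra isomorphism $\Lambda_{\mathbb C}(Y)\to\Lambda_{\mathbb C}(X_{x-1})$; and $\rho$ is, by its very definition, the algebra homomorphism determined by $\rho(h_a(Y^{\varphi}))=h_a[p_{d(\varphi)}(Y)]$. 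Finally I would read $\Pi|_{\Lambda_{\mathbb C}(X_{f=x-1})}$ as the projection $\otimes_{f\in\Phi}\Lambda_{\mathbb C}(X_f)\to\Lambda_{\mathbb C}(X_{x-1})$ that applies the augmentation $p_m(X_f)\mapsto 0$ $(m\ge 1)$ to every tensor factor except the one indexed by the degree-one polynomial $x-1$; being a tensor product of algebra homomorphisms, it is itself one. Hence both legs $\Pi\circ T$ and $t\circ\omega\circ\rho$ of the square are algebra homomorphisms from $\otimes_{\varphi\in\Theta}\Lambda_{\mathbb C}(Y^{\varphi})$ to $\Lambda_{\mathbb C}(X_{x-1})$.

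Since $\otimes_{\varphi\in\Theta}\Lambda_{\mathbb C}(Y^{\varphi})$ is the polynomial ring freely generated by the power sums $p_k(Y^{\varphi})$ with $\varphi\in\Theta$ and $k\ge 1$, it then suffices to check that the two legs agree on each such generator. Fix $\varphi$ and put $b=d(\varphi)$, $n=kb$. Along $\Pi\circ T$, I would substitute $p_k(Y^{\varphi})$ by the right-hand side of~(\ref{eq:transform}) and then apply $\Pi$: every summand attached to an $x$ with $f_x\neq x-1$ is killed, and the one surviving term is the one coming from the unique root $x=1$ of the polynomial $x-1$, for which $d(f_x)=1$ and $\xi(x)=\xi(1)=1$; this yields $\Pi\circ T\bigl(p_k(Y^{\varphi})\bigr)=(-1)^{n-1}p_n(X_{x-1})=(-1)^{kb-1}p_{kb}(X_{x-1})$. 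Along $t\circ\omega\circ\rho$, I would invoke the standard fact that plethysm by $p_b$ is the algebra endomorphism of $\Lambda_{\mathbb C}$ sending $y_j\mapsto y_j^{b}$, equivalently $p_k\mapsto p_{kb}$ — this is exactly what makes $\rho(h_a(Y^{\varphi}))=h_a[p_b(Y)]$ hold, compare~(\ref{eq:defplethysm}) and the generating function used in the proof of Theorem~\ref{thm:lm1} — so that $\rho(p_k(Y^{\varphi}))=p_{kb}(Y)$. Then Lemma~\ref{lem:involution} gives $\omega(p_{kb}(Y))=(-1)^{kb-1}p_{kb}(Y)$, and $t$ carries this to $(-1)^{kb-1}p_{kb}(X_{x-1})$. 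The two results agree, so the diagram commutes.

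The verification that the four maps are algebra homomorphisms, and the two one-line computations on power sums, are routine. The one place that genuinely needs care — the main obstacle — is the combinatorial bookkeeping in formula~(\ref{eq:transform}): one must be certain that, after projecting onto the $X_{x-1}$-component, exactly one term of the sum over $M_n$ survives, namely the term attached to the unit root $x=1$, and that its coefficient $\xi(1)$ really is $1$ (because $\xi$ is a multiplicative character). A secondary point worth making explicit is the identity $\rho(p_k(Y^{\varphi}))=p_{kb}(Y)$, which is just the translation of $\sum_{a\ge 0}h_a[p_b]u^{a}=\prod_{j\ge 1}(1-y_j^{b}u)^{-1}$ into power sums.
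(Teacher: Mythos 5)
Your proposal is correct and follows essentially the same route as the paper's own proof: reduce to the power-sum generators $p_k(Y^{\varphi})$, use formula~(\ref{eq:transform}) together with the projection to get $(-1)^{n-1}p_n(X_{x-1})$ on one leg, and use $\rho(p_k(Y^{\varphi}))=p_n(Y)$ with Lemma~\ref{lem:involution} on the other. Your write-up is somewhat more explicit than the paper's (spelling out that all four arrows are algebra homomorphisms and that the surviving term in the projected sum has coefficient $\xi(1)=1$), but the underlying argument is identical.
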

\begin{proof}
We rewrite equation~(\ref{eq:transform}) as follows
\begin{equation*}
p_k(Y^{\varphi})=(-1)^{n-1}\sum_{x\in M_{n}}\xi
(x)p_{n/d(f_x)}(X_{f_x})\,,
\end{equation*}
where $\xi\in\varphi$, $x\in f_x$ and $n= k\cdot d(\varphi)$. If we
apply plethysm on $p_k(Y^{\varphi})$ we get $p_n(Y)$. Applying the
projection map $\Pi |_{\Lambda_{\mathbb C}(X_{f=x-1})}$ on the
right-hand side of equation~(\ref{eq:transform}) yields
$(-1)^{n-1}p_n(X_{x-1})$. Since $\{p_n: n=1,2,\ldots\}$ are
algebraically independent over $\mathbb C$ and $\{p_{\lambda}:
\lambda\ \text{a\ partition}\}$ form a basis for $\Lambda_{\mathbb
C}$, we get the theorem from Lemma~\ref{lem:involution}.
\end{proof}

\begin{corollary}\label{col:equi}
If we use
$ch(\chi^{(n)}_{(q_1,\ldots,q_{n-1})}\uparrow_{U_n}^{G_n})(X_f: f\in
\Phi)$ to denote the expression of the characteristic map of
$ch(\chi^{(n)}_{(q_1,\ldots,q_{n-1})}\uparrow_{U_n}^{G_n})$ in terms
of basis  $\{\tilde P_{\bm{\mu}} | \bm{\mu}\in\mathcal{P}^{\Phi}\}$,
then we have the following identity:
\begin{align*}
ch(\chi^{(n)}_{(q_1,\ldots,q_{n-1})}\uparrow_{U_n}^{G_n})(X_f: f\in
\Phi)&=t\circ\omega (\rho_n)\\
&=q^{n-1}(q-1) \omega(\tilde{P}_n\left(X_{x-1}\right)).
\end{align*}
\end{corollary}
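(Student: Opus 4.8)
\textbf{Proof proposal for Corollary~\ref{col:equi}.}
The plan is to feed the already-established identity $\rho_n = q^{n-1}(q-1)\tilde P_n(Y;q)$ (Corollary~\ref{col:halli}) into the commutative square of Theorem~\ref{thm:comm} and chase the element $ch(\chi^{(n)}_{(q_1,\ldots,q_{n-1})}\uparrow_{U_n}^{G_n})$ around the diagram. Recall that by equation~(\ref{eq:chamap}) this characteristic map, viewed in the $S_{\bm\lambda}$-basis, lives in $\otimes_{\varphi\in\Theta}\Lambda_{\mathbb C}(Y^{\varphi})$, and that applying $\rho$ to it is by definition $\rho_n$. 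So the left-then-bottom route sends it to $t\circ\omega(\rho_n)$. The claim is that the top-then-right route sends the same element to $ch(\chi^{(n)}_{(q_1,\ldots,q_{n-1})}\uparrow_{U_n}^{G_n})(X_f\colon f\in\Phi)$; by commutativity of the diagram the two outputs agree, which is exactly the first asserted equality.

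First I would make precise the top-right corner. The map $T$ rewrites a class function from the $S_{\bm\lambda}$-basis into the $\tilde P_{\bm\mu}$-basis; applied to $ch(\chi^{(n)}_{(q_1,\ldots,q_{n-1})}\uparrow_{U_n}^{G_n})$ this produces, by definition, the expression denoted $ch(\chi^{(n)}_{(q_1,\ldots,q_{n-1})}\uparrow_{U_n}^{G_n})(X_f\colon f\in\Phi)$ — but still as an element of the full tensor product $\otimes_{f\in\Phi}\Lambda_{\mathbb C}(X_f)$. The projection $\Pi|_{\Lambda_{\mathbb C}(X_{f=x-1})}$ then restricts to the single factor indexed by the linear polynomial $f = x-1$. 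The content here is that, for the Gelfand–Graev character specifically, no information is lost by this projection: one should observe (this is essentially Thiem's computation, or can be read off from the $ht(\bm\lambda)=1$ condition in (\ref{eq:chamap}) together with the transition formula (\ref{eq:transform})) that the $\tilde P_{\bm\mu}$-expansion of $ch(\chi^{(n)}_{(q_1,\ldots,q_{n-1})}\uparrow_{U_n}^{G_n})$ is already supported on the single orbit $f = x-1$, so the projection is the identity on it. Thus the top-then-right image is genuinely $ch(\chi^{(n)}_{(q_1,\ldots,q_{n-1})}\uparrow_{U_n}^{G_n})(X_f\colon f\in\Phi)$, now sitting inside $\Lambda_{\mathbb C}(X_{x-1})$.

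With both routes identified, commutativity of Theorem~\ref{thm:comm} gives
\[
ch(\chi^{(n)}_{(q_1,\ldots,q_{n-1})}\uparrow_{U_n}^{G_n})(X_f\colon f\in\Phi) = t\circ\omega(\rho_n).
\]
For the second equality I substitute $\rho_n = q^{n-1}(q-1)\tilde P_n(Y;q)$ from Corollary~\ref{col:halli}. Since $\omega$ is a $\mathbb C$-algebra automorphism (Lemma~\ref{lem:involution}), it pulls the scalar $q^{n-1}(q-1)$ out, giving $q^{n-1}(q-1)\,\omega(\tilde P_n(Y;q))$; then $t$ merely renames the variables $y_i \mapsto X_{i,x-1}$, producing $q^{n-1}(q-1)\,\omega(\tilde P_n(X_{x-1}))$. (Here I am using that $t$ commutes with $\omega$ because $\omega$ is defined in terms of the basis-independent $e_r\mapsto h_r$, equivalently $p_r\mapsto (-1)^{r-1}p_r$, and relabeling variables respects these; one can note this explicitly if desired.)

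I expect the only real obstacle to be the projection step — verifying that the $\tilde P_{\bm\mu}$-support of the Gelfand–Graev characteristic map is concentrated at $f=x-1$, so that $\Pi|_{\Lambda_{\mathbb C}(X_{f=x-1})}$ does not discard anything. Everything else is formal diagram-chasing plus the algebra-homomorphism property of $\omega$ and the cosmetic nature of $t$. If one prefers, this support statement can be sidestepped: the corollary is literally the specialization of Theorem~\ref{thm:comm} to the particular input $ch(\chi^{(n)}_{(q_1,\ldots,q_{n-1})}\uparrow_{U_n}^{G_n})$ whose image under $\rho$ was computed to be $\rho_n$, so the first line of the display is immediate from the theorem and the definition of the right-hand vertical map, and only the second line requires Corollary~\ref{col:halli} and Lemma~\ref{lem:involution}.
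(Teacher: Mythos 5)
Your overall strategy is the same as the paper's: chase $ch(\chi^{(n)}_{(q_1,\ldots,q_{n-1})}\uparrow_{U_n}^{G_n})$ around the square of Theorem~\ref{thm:comm}, identify the left-then-bottom route with $t\circ\omega(\rho_n)$, and then substitute Corollary~\ref{col:halli} using that $\omega$ is an algebra automorphism and $t$ a renaming of variables. That part is fine. The problem is the step you yourself single out as the only real obstacle: showing that $\Pi |_{\Lambda_{\mathbb C}(X_{f=x-1})}$ acts as the identity on this particular element, i.e.\ that its $\tilde P_{\bm{\mu}}$-expansion is supported on $\bm{\mu}$ concentrated at $f=x-1$. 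You do not prove this, and the two ways you suggest of handling it do not work. Reading it off from the $ht(\bm{\lambda})=1$ condition together with the transition formula~(\ref{eq:transform}) is not viable as stated: that formula expresses $p_k(Y^{\varphi})$ as a sum of $p_{n/d(f_x)}(X_{f_x})$ over \emph{all} orbits $f_x$, so a single $S_{\bm{\lambda}}$ with $ht(\bm{\lambda})=1$ spreads across many $X_f$'s, and the concentration at $x-1$ is a nontrivial cancellation over the whole sum in~(\ref{eq:chamap}), not something visible term by term. And the ``sidestep'' in your last paragraph is circular: without the support statement, commutativity of the diagram only yields $\Pi |_{\Lambda_{\mathbb C}(X_{f=x-1})}\bigl[ch(\chi^{(n)}_{(q_1,\ldots,q_{n-1})}\uparrow_{U_n}^{G_n})\bigr]=t\circ\omega(\rho_n)$, which is strictly weaker than the first asserted equality.

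The paper closes this gap with a short representation-theoretic argument rather than a combinatorial one. By the induction formula~(\ref{eq:inform}), $\chi^{(n)}_{(q_1,\ldots,q_{n-1})}\uparrow_{U_n}^{G_n}(g)=0$ unless $g$ is conjugate into $U_n$; every matrix in $U_n$ has characteristic polynomial $(x-1)^n$, and characteristic polynomials are conjugation invariants, so the induced character vanishes on every conjugacy class $K^{\bm{\mu}}$ except those with $\bm{\mu}$ supported at $f=x-1$. Since the characteristic map sends class indicator functions $\pi_{\bm{\mu}}$ to $\tilde P_{\bm{\mu}}$, this is exactly the statement that $ch(\chi^{(n)}_{(q_1,\ldots,q_{n-1})}\uparrow_{U_n}^{G_n})(X_f: f\in\Phi)\in\Lambda_{\mathbb C}(X_{x-1})$, so the projection is the identity on it. You should insert this argument (or an equivalent one); without it the first equality of the corollary is not established.
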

\begin{proof}
From the definition of the induced character by
equation~(\ref{eq:inform}) we know that
$$\chi^{(n)}_{(q_1,\ldots,q_{n-1})}\uparrow_{U_n}^{G_n}(g)=0$$ for
all $g\in G_n$ which are not similar to any unipotent
upper-triangular matrices. Notice that the characteristic polynomial
for all matrices in $U_n$ is $(x-1)^n$. Since similar matrices have
the same characteristic polynomial,
$\chi^{(n)}_{(q_1,\ldots,q_{n-1})}\uparrow_{U_n}^{G_n}$ could
possibly take nonzero values only on those matrices in $G_n$ with
characteristic polynomials equal to $(x-1)^n$. We then have
$$ch(\chi^{(n)}_{(q_1,\ldots,q_{n-1})}\uparrow_{U_n}^{G_n})(X_f: f\in
\Phi) \in \Lambda_{\mathbb C}(X_{x-1})$$ and so
$$\Pi |_{\Lambda_{\mathbb C}(X_{f=x-1})}[ch(\chi^{(n)}_{(q_1,\ldots,q_{n-1})}\uparrow_{U_n}^{G_n})]
=ch(\chi^{(n)}_{(q_1,\ldots,q_{n-1})}\uparrow_{U_n}^{G_n}).$$ By
theorem~\ref{thm:comm} we obtain the corollary.
\end{proof}
\begin{remark}\label{rm:equi}
From the proof of Corollary~\ref{col:equi} we conclude that for any
character $\chi$ of $U_n$, if we induce $\chi$ from $U_n$ to $G_n$,
then we have
\begin{equation*}
ch(\chi\uparrow_{U_n}^{G_n})(X_f: f\in \Phi) = t\circ\omega\circ\rho
(ch(\chi\uparrow_{U_n}^{G_n})(Y^{\varphi}:\varphi\in\Theta)).
\end{equation*}
\end{remark}

For $\lambda={\lambda_1,\ldots,\lambda_l}$ let
$\rho_{\lambda}=\rho_{\lambda_1}\rho_{\lambda_1}\ldots\rho_{\lambda_l}$.
By Corollary~\ref{col:halli} since $\rho_n = q^{n-1}(q-1)
P_n(Y;q^{-1})$ we know that $\{\rho_{\lambda}\}$ forms a basis for
the symmetric function ring $\Lambda_{\mathbb C}(Y)$. Thus
$\rho(ch(\chi\uparrow_{U_n}^{G_n}))$ can be written into
$\rho(ch(\chi\uparrow_{U_n}^{G_n})) = \sum_{\lambda\vdash n}
C_{\lambda}\rho_{\lambda}$ where $C_{\lambda}\in\mathbb C$. We then
define a map as follows.
\begin{definition}
Define $\hat{\rho}: \Lambda_{\mathbb C}(Y) \rightarrow {\mathbb
C}\text{-span}\{S_{\bm{\lambda}} |
\mathbf{\bm{\lambda}}\in\mathcal{P}^{\Theta} \}$ by
\begin{align*}
\hat{\rho}(\rho_n)&:= \sum_{\bm{\lambda} \in
\mathcal{P}_n^{\Theta}\atop ht(\bm{\lambda})=1}
S_{\bm{\lambda}}\\
&=ch(\chi^{(n)}_{(q_1,\ldots,q_{n-1})}\uparrow_{U_n}^{G_n}) .
\end{align*}
and
\begin{align*}
\hat{\rho}(\rho_{\lambda})=\hat{\rho}(\rho_{\lambda_1})\hat{\rho}(\rho_{\lambda_2})\ldots\hat{\rho}(\rho_{\lambda_l}),
\end{align*}
where $\lambda=(\lambda_1,\ldots,\lambda_l)$.
\end{definition}

\begin{proposition}\label{prop:conjecture2}
For a fixed finite field $\mathbb F_q$ and a character $\chi$ of
$U_n$, we have
\begin{equation*}
(\hat{\rho}\circ\rho) (ch(\chi\uparrow_{U_n}^{G_n})) =
ch(\chi\uparrow_{U_n}^{G_n}).
\end{equation*}
\end{proposition}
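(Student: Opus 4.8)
The plan is to show that $\hat\rho$ is a genuine two-sided inverse of $\rho$ on the relevant subspace, and then invoke Corollary~\ref{col:equi} (or Remark~\ref{rm:equi}) to identify the characteristic map. First I would observe that, by Corollary~\ref{col:halli}, $\rho_n = q^{n-1}(q-1)\tilde P_n(Y;q)$, so $\{\rho_\lambda\}_{\lambda\vdash n}$ is a basis of the degree-$n$ part of $\Lambda_{\mathbb C}(Y)$; hence $\hat\rho$ is well defined by linear extension from the values $\hat\rho(\rho_\lambda)=\prod_i\hat\rho(\rho_{\lambda_i})$. The point of the construction is that $\hat\rho(\rho_n)$ is declared to be exactly $ch(\chi^{(n)}_{(q_1,\ldots,q_{n-1})}\uparrow_{U_n}^{G_n})$, the $ht(\bm\lambda)=1$ sum of $S_{\bm\lambda}$'s, and by~(\ref{eq:plechar}) this is the unique preimage of $\rho_n$ under $\rho$ that lies in the span of the $ht=1$ terms.

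Next I would set up the key identity $\rho\circ\hat\rho=\mathrm{id}$ on $\Lambda_{\mathbb C}(Y)$. On the basis element $\rho_\lambda$ this reads $\rho\bigl(\prod_i \hat\rho(\rho_{\lambda_i})\bigr)=\prod_i\rho_{\lambda_i}=\rho_\lambda$, using that $\rho$ is a ring homomorphism (it sends each $h_a(Y^\varphi)$ to a plethysm $h_a[p_b]$ and is multiplicative across the tensor factors) together with the defining equation $\rho\circ\hat\rho(\rho_n)=\rho(ch(\chi^{(n)}\uparrow))=\rho_n$. Since $\{\rho_\lambda\}$ spans, $\rho\circ\hat\rho=\mathrm{id}$. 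Now for a general character $\chi$ of $U_n$, write $\rho(ch(\chi\uparrow_{U_n}^{G_n}))=\sum_{\lambda\vdash n}C_\lambda\,\rho_\lambda$; applying $\hat\rho$ gives a candidate $\Psi:=\sum_\lambda C_\lambda\,\hat\rho(\rho_\lambda)\in{\mathbb C}\text{-span}\{S_{\bm\lambda}\}$, and applying $\rho$ to $\Psi$ returns $\rho(ch(\chi\uparrow_{U_n}^{G_n}))$ by the identity just proved. So the claim $(\hat\rho\circ\rho)(ch(\chi\uparrow_{U_n}^{G_n}))=ch(\chi\uparrow_{U_n}^{G_n})$ is equivalent to: two elements of ${\mathbb C}\text{-span}\{S_{\bm\lambda}\}$ with the same image under $\rho$ must be equal, i.e. $\rho$ is injective on this span — which is \emph{false} in general, since $\rho$ collapses the different $\varphi$'s of equal degree. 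The resolution is that both $\Psi$ and $ch(\chi\uparrow_{U_n}^{G_n})$ lie in the smaller subspace $\Lambda_{\mathbb C}(X_{x-1})$ after the transformation $T$: by the argument in Corollary~\ref{col:equi}, $ch(\chi\uparrow_{U_n}^{G_n})(X_f:f\in\Phi)$ is supported on $f=x-1$ because $\chi\uparrow$ vanishes off the unipotent classes (characteristic polynomial $(x-1)^n$), and the same is true of $\Psi$ by construction since $\hat\rho(\rho_n)=ch(\chi^{(n)}\uparrow)$ already has this property and products of such stay in $\Lambda_{\mathbb C}(X_{x-1})$. On that subspace the composite $\Pi|_{\Lambda_{\mathbb C}(X_{x-1})}\circ T$ is a bijection onto $\Lambda_{\mathbb C}(X_{x-1})$, and Theorem~\ref{thm:comm} says $(\Pi|\circ T)=t\circ\omega\circ\rho$ with $t\circ\omega$ an isomorphism; hence $\rho$ \emph{is} injective on the relevant subspace, and $\Psi=ch(\chi\uparrow_{U_n}^{G_n})$ follows.

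Concretely I would structure the write-up as: (i) recall $\rho$ is a ring homomorphism and deduce $\rho\circ\hat\rho=\mathrm{id}_{\Lambda_{\mathbb C}(Y)}$; (ii) note $ch(\chi\uparrow_{U_n}^{G_n})$ and $(\hat\rho\circ\rho)(ch(\chi\uparrow_{U_n}^{G_n}))$ both lie in the image of the section ${\mathbb C}\text{-span}\{S_{\bm\lambda}\}$ supported on characteristic polynomial $(x-1)^n$, citing the vanishing argument from the proof of Corollary~\ref{col:equi} together with the fact that $\chi$ is a $\mathbb C$-linear combination of the linear supercharacters $\chi^{(n)}_{(q_1,\ldots,q_{n-1})}$ whose $\hat\rho$-images are by definition such characteristic maps; (iii) apply $t\circ\omega\circ\rho$ to both, use Theorem~\ref{thm:comm} to rewrite this as $\Pi|_{\Lambda_{\mathbb C}(X_{x-1})}\circ T$, observe both elements already live in $\Lambda_{\mathbb C}(X_{x-1})$ so the projection is the identity on them, and conclude equality from injectivity of $t\circ\omega$ and of $T$ restricted to this subspace. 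The main obstacle is step (ii): one must pin down precisely why $(\hat\rho\circ\rho)(ch(\chi\uparrow))$ is supported on $f=x-1$ — this requires knowing that \emph{every} character $\chi$ of $U_n$ has $ch(\chi\uparrow_{U_n}^{G_n})$ expressible in the span of the $\hat\rho(\rho_\lambda)$'s with $\lambda\vdash n$, equivalently that the restriction $\rho$ maps the span of such characteristic maps \emph{into} $\Lambda_{\mathbb C}(Y)_n$ with the $\hat\rho$-preimage landing back in the correct subspace; I would handle this by decomposing $\chi$ into linear supercharacters (using Corollary~\ref{col:multilichar} for the connected-component multiplicativity) so that the needed support statement reduces to the single-component case already established.
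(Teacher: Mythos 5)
Your argument is correct and follows essentially the same route as the paper, whose proof is a one-line appeal to Theorem~\ref{thm:comm}, Remark~\ref{rm:equi}, the multiplicativity of $\rho$, and $\omega$ being an automorphism; you have simply made explicit the key point that $\rho$ is not injective on all of $\otimes_{\varphi\in\Theta}\Lambda_{\mathbb C}(Y^{\varphi})$, but both $ch(\chi\uparrow_{U_n}^{G_n})$ and $(\hat{\rho}\circ\rho)(ch(\chi\uparrow_{U_n}^{G_n}))$ are supported on $f=x-1$, where $\Pi|_{\Lambda_{\mathbb C}(X_{f=x-1})}\circ T=t\circ\omega\circ\rho$ is injective. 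One small correction to your closing paragraph: the ``main obstacle'' you describe is not actually there, since $(\hat{\rho}\circ\rho)(ch(\chi\uparrow_{U_n}^{G_n}))=\sum_{\lambda\vdash n}C_{\lambda}\hat{\rho}(\rho_{\lambda})$ lies in the span of the $\hat{\rho}(\rho_{\lambda})$ by the very definition of $\hat{\rho}$ as a linear map on the basis $\{\rho_{\lambda}\}$ (so the support claim reduces to the single-component case plus multiplicativity, exactly as you say), and you should not assert that an arbitrary character of $U_n$ is a $\mathbb C$-linear combination of the linear supercharacters --- that is false in general, but fortunately it is not needed anywhere in your argument.
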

\begin{proof}
Since $\omega$ is an automorphism and $\rho$ is multiplicative, the
proposition follows from Theorem~\ref{thm:comm} and
Remark~\ref{rm:equi}.
\end{proof}
Suppose $\rho(ch(\chi\uparrow_{U_n}^{G_n})) = \sum_{\lambda\vdash n}
C_{\lambda}\rho_{\lambda}$ where $C_{\lambda}\in\mathbb C$, from the
definition of $\hat{\rho}$ we get
\begin{align}\label{eq:inverse}
\hat{\rho}\circ\rho(ch(\chi\uparrow_{U_n}^{G_n})) &=
\sum_{\lambda\vdash n} C_{\lambda}(\hat{\rho}(\rho_{\lambda})) \nonumber\\
&=\sum_{\lambda\vdash n}
C_{\lambda}\hat{\rho}(\rho_{\lambda_1})\hat{\rho}(\rho_{\lambda_2})\ldots\hat{\rho}(\rho_{\lambda_l}).
\end{align}
Using Proposition~\ref{prop:conjecture2} we get the following
corollary.
\begin{corollary}\label{col:irrdecomp}
For a fixed finite field $\mathbb F_q$ and a character $\chi$ of
$U_n$, suppose $\nobreak{ch(\chi\uparrow_{U_n}^{G_n})} =
\sum_{\lambda\vdash n} C_{\lambda}\rho_{\lambda}$ where
$C_{\lambda}\in\mathbb C$. We have
\begin{align*}
ch(\chi\uparrow_{U_n}^{G_n})&= \sum_{\lambda\vdash n}
C_{\lambda}\rho_{\lambda}\\
&=\sum_{\lambda\vdash n} C_{\lambda}\left(\sum_{\bm{\lambda}^{(1)}
\in \mathcal{P}_{\lambda_1}^{\Theta}\atop ht(\bm{\lambda}^{(1)})=1}
S_{\bm{\lambda}^{(1)}}\right) \left(\sum_{\bm{\lambda}^{(2)} \in
\mathcal{P}_{\lambda_2}^{\Theta}\atop ht(\bm{\lambda}^{(2)})=1}
S_{\bm{\lambda}^{(2)}}\right)\cdots\left(\sum_{\bm{\lambda}^{(l)}
\in \mathcal{P}_{\lambda_l}^{\Theta}\atop ht(\bm{\lambda}^{(l)})=1}
S_{\bm{\lambda}^{(l)}}\right) \lower 25pt \hbox{.}
\end{align*}
\end{corollary}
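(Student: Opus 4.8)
The plan is to obtain the stated identity by combining Proposition~\ref{prop:conjecture2} with the definition of $\hat{\rho}$ and the expansion~(\ref{eq:inverse}); no ingredient beyond the results already proved is needed. First I would make the hypothesis precise. By Corollary~\ref{col:halli} we have $\rho_n = q^{n-1}(q-1)P_n(Y;q^{-1})$, so each $\rho_n$ is a nonzero scalar multiple of the one-row Hall--Littlewood function $q_n(Y;q^{-1})$; since the products $\{q_\mu(Y;t)\}$ form a basis of $\Lambda_{\mathbb C}(Y)$ (see~\cite{Mac}), the family $\{\rho_\lambda : \lambda\vdash n\}$ is a $\mathbb C$-basis of the degree-$n$ component of $\Lambda_{\mathbb C}(Y)$. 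Hence there are unique scalars $C_\lambda\in\mathbb C$ with
$$\rho\bigl(ch(\chi\uparrow_{U_n}^{G_n})\bigr) = \sum_{\lambda\vdash n} C_\lambda\,\rho_\lambda ,$$
these being the coefficients referred to in the statement, and applying $\hat{\rho}$ is legitimate because $\hat{\rho}$ is defined on exactly this basis.

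Next I would apply $\hat{\rho}$ to both sides of the last display. By construction $\hat{\rho}$ is multiplicative on the $\rho_\lambda$, with $\hat{\rho}(\rho_m) = \sum_{\bm{\mu}\in\mathcal{P}_m^{\Theta},\ ht(\bm{\mu})=1} S_{\bm{\mu}} = ch(\chi^{(m)}_{(q_1,\ldots,q_{m-1})}\uparrow_{U_m}^{G_m})$, so this reproduces equation~(\ref{eq:inverse}):
$$\hat{\rho}\circ\rho\bigl(ch(\chi\uparrow_{U_n}^{G_n})\bigr) = \sum_{\lambda\vdash n} C_\lambda\,\hat{\rho}(\rho_{\lambda_1})\hat{\rho}(\rho_{\lambda_2})\cdots\hat{\rho}(\rho_{\lambda_l}).$$
By Proposition~\ref{prop:conjecture2} the left-hand side equals $ch(\chi\uparrow_{U_n}^{G_n})$, and substituting $\hat{\rho}(\rho_{\lambda_i}) = \sum_{\bm{\lambda}^{(i)}\in\mathcal{P}_{\lambda_i}^{\Theta},\ ht(\bm{\lambda}^{(i)})=1} S_{\bm{\lambda}^{(i)}}$ into each factor yields precisely the displayed formula; the first line $ch(\chi\uparrow_{U_n}^{G_n}) = \sum C_\lambda\rho_\lambda$ in the statement is then read as the shorthand recording these same coefficients under the identification furnished by $\hat{\rho}$.

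I do not expect a genuine obstacle here: the corollary is a formal consequence of Proposition~\ref{prop:conjecture2}, and the only place where real work sits is that proposition, which in turn rests on the commuting square of Theorem~\ref{thm:comm} (relating $\rho$, the basis change $T$, the involution $\omega$, and the projection onto $\Lambda_{\mathbb C}(X_{x-1})$), on Remark~\ref{rm:equi}, and on the fact that $\chi\uparrow_{U_n}^{G_n}$ is supported on conjugacy classes with characteristic polynomial $(x-1)^n$. The one bookkeeping point worth checking explicitly is that the scalars $C_\lambda$ occurring on the two sides of $\hat{\rho}\circ\rho$ coincide, which is immediate once one knows that $\rho$ and $\hat{\rho}$ restrict to mutually inverse bijections between the relevant graded pieces.
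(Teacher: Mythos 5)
Your argument is correct and is essentially identical to the paper's: expand $\rho(ch(\chi\uparrow_{U_n}^{G_n}))$ in the basis $\{\rho_{\lambda}\}$ (justified exactly as the paper does, via Corollary~\ref{col:halli}), apply $\hat{\rho}$ using its multiplicativity to obtain equation~(\ref{eq:inverse}), and identify the left-hand side with $ch(\chi\uparrow_{U_n}^{G_n})$ by Proposition~\ref{prop:conjecture2}. Your reading of the hypothesis as referring to the coefficients of $\rho(ch(\chi\uparrow_{U_n}^{G_n}))$ matches the paper's intent, so there is nothing to add.
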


\begin{remark}
It is difficult to get an expression for
$ch(\chi\uparrow_{U_n}^{G_n})$ in terms of basis $\{S_{\bm{\lambda}}
| \mathbf{\bm{\lambda}}\in\mathcal{P}^{\Theta} \}$, which gives the
irreducible decomposition of the induced character. However if we
know the plethysm of the characteristic map of
$\chi\uparrow_{U_n}^{G_n}$, we may use $\hat{\rho}$ to get the
irreducible decomposition of $ch(\chi\uparrow_{U_n}^{G_n})$. We hope
the results could contribute to research in this problem and we list
some open problems in Section~\ref{sec:probs}.
\end{remark}

\section{Further Questions}\label{sec:probs}
The induced characters that we are studying in this paper are very
special, so a natural question to ask is if we can give a nice
formula for the characteristics of all the induced supercharacters
from $U_n$ to $G_n$. Zelevinsky~\cite{Zele} and Thiem and
Vinroot~{\cite{NatVin}} have worked on the case of degenerate
Gelfand-Graev characters. The question of how the generalized
Gelfand-Graev representations of the finite unitary group decompose
is still open. The generalized Gelfand-Graev representations, which
are defined by Kawanaka~\cite{Kawa}, are obtained by inducing
certain irreducible representations from a unipotent
subgroup~\cite{NatVin}. Here the supercharacters that we are
considering are more general than the case of the generalized
Gelfand-Graev representations. We hope that the ideas and results
developed in this paper could help to work on this direction.

Let us compute plethysms of the characteristic map of some induced
supercharacters.
\begin{example}\label{ex:example}
For $q=2$, we have
\begin{align*}
\lower 5pt \hbox{$\rho\circ ch$} \left(\lower 4pt
\hbox{$\chi^{\begin{tikzpicture} \filldraw [black] (0,0) circle
(1.5pt) (.5,0) circle (1.5pt)  (1,0) circle (1.5pt); \node at (0,
-.2) {$\scscs 1$}; \node at (0.5, -.2) {$\scscs 2$}; \node at (0.5,
.55) {$\scs 1$}; \node at (1, -.2) {$\scscs 3$}; \draw (0,0) ..
controls (.25,.4) and (.75,.4).. (1,0);
\end{tikzpicture}} \uparrow_{U_3}^{G_3}$} \right) = (\rho_{3} +
\rho_2\rho_1)|_{q=2}
\end{align*}
\begin{align*}
\lower 5pt \hbox{$\rho\circ ch$} \left(\lower 4pt
\hbox{$\chi^{\begin{tikzpicture} \filldraw [black] (0,0) circle
(1.5pt) (.5,0) circle (1.5pt)  (1,0) circle (1.5pt) (1.5,0) circle
(1.5pt); \node at (0, -.2) {$\scscs 1$}; \node at (0.5, -.2)
{$\scscs 2$}; \node at (0.75, .75) {$\scs 1$}; \node at (1, -.2)
{$\scscs 3$};  \node at (1.5, -.2) {$\scscs 4$}; \draw (0,0) ..
controls (.35,.6) and (1.15,.6).. (1.5,0);
\end{tikzpicture}} \uparrow_{U_4}^{G_4}$} \right) = (\rho_{4} +
2\rho_3\rho_1+\rho_2\rho_1^2)|_{q=2}
\end{align*}
\begin{align*}
\lower 5pt \hbox{$\rho\circ ch$} \left(\lower 4pt
\hbox{$\chi^{\begin{tikzpicture} \filldraw [black] (0,0) circle
(1.5pt) (.5,0) circle (1.5pt)  (1,0) circle (1.5pt) (1.5,0) circle
(1.5pt); \node at (0, -.2) {$\scscs 1$}; \node at (0.5, -.2)
{$\scscs 2$}; \node at (0.5, .55) {$\scs 1$}; \node at (1, -.2)
{$\scscs 3$};  \node at (1, .55) {$\scs 1$}; \node at (1.5, -.2)
{$\scscs 4$}; \draw (0,0) .. controls (.25,.4) and (.75,.4)..
(1,0);\draw (.5,0) .. controls (.75,.4) and (1.25,.4).. (1.5,0);
\end{tikzpicture}} \uparrow_{U_4}^{G_4}$} \right)
= \lower 5pt \hbox{$\rho\circ ch$} \left(\lower 4pt
\hbox{$\chi^{\begin{tikzpicture} \filldraw [black] (0,0) circle
(1.5pt) (.5,0) circle (1.5pt)  (1,0) circle (1.5pt) (1.5,0) circle
(1.5pt); \node at (0, -.2) {$\scscs 1$}; \node at (0.5, -.2)
{$\scscs 2$}; \node at (0.75, .75) {$\scs 1$};\node at (1, -.2)
{$\scscs 3$}; \node at (0.75, .35) {$\scs 1$};\node at (1.5, -.2)
{$\scscs 4$}; \draw (0,0) .. controls (.35,.7) and (1.15,.7)..
(1.5,0); \draw (.5,0) .. controls (.625,.3) and (.875,.3).. (1,0);
\end{tikzpicture}} \uparrow_{U_4}^{G_4}$} \right) =  (2\rho_{4} +
\rho_2\rho_2+\rho_3\rho_1)|_{q=2}
\end{align*}
\begin{align*}
\lower 5pt \hbox{$\rho\circ ch$} \left(\lower 4pt
\hbox{$\chi^{\begin{tikzpicture} \filldraw [black] (0,0) circle
(1.5pt) (.5,0) circle (1.5pt)  (1,0) circle (1.5pt) (1.5,0) circle
(1.5pt); \node at (0, -.2) {$\scscs 1$}; \node at (0.5, -.2)
{$\scscs 2$}; \node at (0.25, .4) {$\scs 1$}; \node at (1, -.2)
{$\scscs 3$};  \node at (1, .55) {$\scs 1$}; \node at (1.5, -.2)
{$\scscs 4$}; \draw (0,0) .. controls (.125,.2) and (.375,.2)..
(.5,0);\draw (.5,0) .. controls (.75,.4) and (1.25,.4).. (1.5,0);
\end{tikzpicture}} \uparrow_{U_4}^{G_4}$} \right)
= (\rho_{4} + \rho_3\rho_1)|_{q=2}
\end{align*}
\end{example}

Inspired from these results, we give the following conjecture and
open questions.

\begin{conjecture}
For a fixed finite field $\mathbb F_q$ and a supercharacter $\chi$
of $U_n$, we have
\begin{equation*}
\rho\circ ch(\chi\uparrow_{U_n}^{G_n}) \in \mathbb
N[\rho_1,\ldots,\rho_n].
\end{equation*}
\end{conjecture}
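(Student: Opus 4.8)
The plan is to deduce the conjecture from a sharper, purely representation-theoretic statement: for every supercharacter $\chi$ of $U_n$ the induced character $\chi\uparrow_{U_n}^{G_n}$ is a nonnegative integral combination (a multiset) of characters of the form $\psi\uparrow_{U_n}^{G_n}$ with $\psi$ a linear supercharacter of $U_n$. Granting this, one applies $\rho\circ ch$ and invokes Corollary~\ref{col:multilichar}: each $\rho\circ ch(\psi\uparrow_{U_n}^{G_n})$ is a monomial $\rho_{n_1}\cdots\rho_{n_k}$, so the whole expression lies in $\mathbb N[\rho_1,\dots,\rho_n]$. A degree count already forces the number of linear pieces to equal $\chi(1)$, which is a useful consistency check: it agrees with every instance in Example~\ref{ex:example} at $q=2$ (the total multiplicity of the monomials in each displayed $\rho$-expression is exactly $\chi(1)$).

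To prove the sharper statement I would first reduce to \emph{connected} supercharacters. If the labelled set partition indexing $\chi$ splits into connected components supported on consecutive intervals $I_1,\dots,I_k$, then, after conjugating the components into block-staircase position, $\chi$ is inflated from $U_{|I_1|}\times\cdots\times U_{|I_k|}$; inducing in stages through the corresponding parabolic of $G_n$ and using that $ch$ is a Hopf-algebra isomorphism (so the induction product becomes multiplication) together with the multiplicativity of $\rho$ shows that both sides of the sharper statement are multiplicative over components, exactly as in Corollary~\ref{col:multilichar}. It therefore suffices to treat a connected supercharacter $\chi_\lambda$.

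For connected $\chi_\lambda$ I would induct on $d(\lambda):=\log_q\chi_\lambda(1)$, the nonnegative integer measuring how far $\chi_\lambda$ is from being linear. When $d(\lambda)=0$ the supercharacter is a linear supercharacter of consecutive-arc type and the statement is Corollary~\ref{col:multilichar}. For $d(\lambda)\ge 1$ one chooses an arc of $\lambda$ responsible for part of $d(\lambda)$ (one that strictly covers a vertex or crosses/nests with another arc) and resolves it: via Mackey theory for $U_n\le G_n$ relative to a maximal parabolic, or equivalently via the branching identities for supercharacters along $U_{n-1}\subset U_n$ together with the superclass-sum description of $\chi_\lambda$ (\cite{Nat2},\cite{MulAu}), one should be able to write $\chi_\lambda\uparrow_{U_n}^{G_n}$ as a $\mathbb Z_{\ge 0}$-combination of induced supercharacters $\chi_{\lambda'}\uparrow_{U_n}^{G_n}$ with $d(\lambda')<d(\lambda)$; since induction and $\rho$ are additive, the induction hypothesis closes the case. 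A second, more computational route is to attack $ch(\chi_\lambda\uparrow_{U_n}^{G_n})$ directly from~(\ref{eq:inform}): $\bar\chi_\lambda$ is given by a product formula on $U_n$ and vanishes off $U_n$, so $\chi_\lambda\uparrow_{U_n}^{G_n}(g)$ is a sum of these products over the flags fixed by $g$ and is nonzero only when $g$ is unipotent; feeding this into Green's formula for $ch$ (as in the proof of Corollary~\ref{col:equi}, where only unipotent classes of $G_n$ contribute) and then applying Theorem~\ref{thm:comm} reduces the conjecture to showing that these manifestly nonnegative class-function values expand with nonnegative coefficients in the basis $\{\rho_\lambda\}$ of $\Lambda_{\mathbb C}(Y)$.

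The main obstacle is the inductive step for connected diagrams with several mutually crossing arcs: the known uncrossing and branching rules for $U_n$-supercharacters are not manifestly positive, and it is not clear a priori that resolving one crossing introduces no cancellation after the successive applications of $ch$ and $\rho$; equivalently, in the direct approach one must control the transition matrix from the images of $\{\tilde P_\mu(X_{x-1})\}_{\mu\vdash n}$ under $(t\circ\omega)^{-1}$ to $\{\rho_\lambda\}_{\lambda\vdash n}$ and show that the particular nonnegative combinations arising from induced supercharacters land in the nonnegative cone. One should also track the dependence on $q$: the coefficients are polynomials in $q$, and positivity is asserted for each fixed prime power $q\ge 2$ (ideally one proves the stronger statement that these polynomials lie in $\mathbb N[q]$), so the argument must be uniform in $q$. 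Pinning down which linear supercharacters occur in the sharper statement — presumably those whose component-size compositions are the consecutive-interval coarsenings compatible with $\lambda$, counted with appropriate multiplicity — would simultaneously settle the conjecture and, through $\hat\rho$ and Corollary~\ref{col:irrdecomp}, yield combinatorial information on the irreducible decomposition of $\chi\uparrow_{U_n}^{G_n}$.
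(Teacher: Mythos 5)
The statement you are asked to prove is stated in the paper as an open conjecture: the paper offers no proof of it, only the supporting evidence of Example~\ref{ex:example} and the consequences that would follow (the dimension formula~(\ref{eq:dim}) and Corollary~\ref{col:irrdecomp}). Your proposal does not close the conjecture either, and you say so yourself. The parts of your plan that do work are essentially repackagings of what the paper already contains: the reduction to connected components is Corollary~\ref{col:multilichar} together with multiplicativity of $\rho$ and of induction through a parabolic; the base case $d(\lambda)=0$ is again Corollary~\ref{col:multilichar}; and the ``sharper statement'' is not actually sharper --- since $\{\rho_\lambda\}$ is a basis of $\Lambda_{\mathbb C}(Y)$ and $\hat\rho\circ\rho$ recovers $ch(\chi\uparrow_{U_n}^{G_n})$ (Proposition~\ref{prop:conjecture2}), writing $\rho\circ ch(\chi\uparrow_{U_n}^{G_n})$ as an $\mathbb N$-combination of monomials $\rho_{n_1}\cdots\rho_{n_k}$ is \emph{equivalent}, via $ch$ being an isomorphism, to decomposing $\chi\uparrow_{U_n}^{G_n}$ into induced linear supercharacters with nonnegative integer multiplicities. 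So the reformulation buys no leverage.

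The genuine gap is the inductive step, which is the entire content of the conjecture. You propose to resolve a crossing or covering arc of a connected $\lambda$ and write $\chi_\lambda\uparrow_{U_n}^{G_n}$ as a $\mathbb Z_{\ge 0}$-combination of $\chi_{\lambda'}\uparrow_{U_n}^{G_n}$ with smaller $d$, ``via Mackey theory \dots or equivalently via the branching identities,'' but no such positive resolution rule is exhibited, and the known branching and restriction rules for $U_n$-supercharacters (Thiem~\cite{Nat2}, \cite{MulAu}) concern the tower $U_{n-1}\subset U_n$, not induction to $G_n$; they do not by themselves control cancellation after inducing to $G_n$ and applying $\rho\circ ch$. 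The alternative ``direct'' route reduces to showing that certain explicit class functions expand nonnegatively in the basis $\{\rho_\lambda\}$, which is again a restatement of the conjecture, not a proof. The consistency check via $\chi(1)=\sum_\lambda C_\lambda$ is correct and matches the paper's remark following the conjecture, but it only constrains the sum of the coefficients, not their signs. As it stands the proposal is a reasonable research program, not a proof; to make progress you would need to actually produce and verify a positivity-preserving uncrossing identity at the level of induced characters, or a combinatorial formula for the $C_\lambda$ with manifestly nonnegative terms.
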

If the above conjecture is ture, then the following remark is
meaningful.
\begin{remark}
For a fixed finite field $\mathbb F_q$ and a character $\chi$ of
$U_n$, suppose $\nobreak{ch(\chi\uparrow_{U_n}^{G_n})} =
\sum_{\lambda\vdash n} C_{\lambda}\rho_{\lambda}$ where
$C_{\lambda}\in\mathbb C$. We have
\begin{equation}\label{eq:dim}
\dim(\chi) = \sum_{\lambda\vdash n} C_{\lambda}.
\end{equation}
\end{remark}
\begin{proof}
From Corollary~\ref{col:multilichar} we have
\begin{equation*}
\chi\uparrow_{U_n}^{G_n} = \sum_{\lambda\vdash n} C_{\lambda}
(\chi^{\lambda_1 |\lambda_2 | \ldots |
\lambda_l}_{\vec{q_1},\ldots,\vec{q_{l}}}\uparrow_{U_n}^{G_n}) =
\bigg(\sum_{\lambda\vdash n} C_{\lambda} \chi^{\lambda_1 |\lambda_2
| \ldots |
\lambda_l}_{\vec{q_1},\ldots,\vec{q_{l}}}\bigg)\uparrow_{U_n}^{G_n},
\end{equation*}
where $\vec{q_i}=(q_{i,1},\ldots,q_{i,\lambda_i-1})\in (\mathbb
F_q^{\times})^{\lambda_i-1}$. So we have
\begin{equation*}
\dim(\chi) = \dim\bigg(\sum_{\lambda\vdash n} C_{\lambda}
\chi^{\lambda_1 |\lambda_2 | \ldots |
\lambda_l}_{\vec{q_1},\ldots,\vec{q_{l}}}\bigg) =
\sum_{\lambda\vdash n} C_{\lambda} \dim(\chi^{\lambda_1 |\lambda_2 |
\ldots | \lambda_l}_{\vec{q_1},\ldots,\vec{q_{l}}}).
\end{equation*}
Since $\dim(\chi^{\lambda_1 |\lambda_2 | \ldots |
\lambda_l}_{\vec{q_1},\ldots,\vec{q_{l}}}) = 1$, we prove the
remark.
\end{proof}

\begin{question}
For a fixed finite field $\mathbb F_q$ and a supercharacter $\chi$
of $U_n$, try to find a formula for the plethysms of the
characteristic map of $\chi\uparrow_{U_n}^{G_n}$.
\begin{equation*}
\rho\circ ch(\chi\uparrow_{U_n}^{G_n})= \sum_{\lambda\vdash n}
C_{\lambda}\rho_{\lambda},
\end{equation*}
where
$\rho_{\lambda}=\rho_{\lambda_1}\rho_{\lambda_1}\ldots\rho_{\lambda_l}$
for $\lambda={\lambda_1,\ldots,\lambda_l}$. It is nice to give a
combinatorial formula for the coefficient $C_{\lambda}$ since the
example above suggest a few possible rules.
\end{question}
\begin{remark}
If we have the formula of $\rho\circ ch(\chi\uparrow_{U_n}^{G_n})$,
we can easily get the expression for the characteristic map of
$\chi\uparrow_{U_n}^{G_n}$ in terms of basis $\{\tilde P_{\bm{\mu}}
| \bm{\mu}\in\mathcal{P}^{\Phi} \}$ by Remark~\ref{rm:equi}. We may
also use $\hat{\rho}$ to get an expression in the basis
$\{S_{\bm{\lambda}} | \mathbf{\bm{\lambda}}\in\mathcal{P}^{\Theta}
\}$ by Corollary~\ref{col:irrdecomp}.
\end{remark}

\begin{question}
Up to now the induced representations that we are considering are in
characteristic zero. Another problem we can think about is what
happens in characteristic $p$ case.
\end{question}

\parindent=0pt

\end{document}